\theoremstyle{plain}
\newtheorem{theorem}{Theorem}[section]
\newtheorem*{theorem-nn}{Theorem}
\newtheorem{lemma}[theorem]{Lemma}
\newtheorem*{proposition-nn}{Proposition}
\theoremstyle{definition}
\newtheorem{definition}[theorem]{Definition}
\newtheorem{example}[theorem]{Example}
\newtheorem{remark}[theorem]{Remark}
\theoremstyle{remark}
\newcommand{\bZ}{\mathbbm{Z}}\newcommand{\bQ}{\mathbbm{Q}}
\newcommand{\bC}{\mathbbm{C}}
\newcommand{\bN}{\mathbbm{N}}\newcommand{\bF}{\mathbbm{F}}
\newcommand{\bR}{\mathbbm{R}}
\newcounter{sub}
{\begin{list}{(\arabic{sub})}{\usecounter{sub}%
\setlength{\leftmargin}{2em}}}{\end{list}}
\title{An application of cohomological invariants}
\author[A. Hoshi]{Akinari Hoshi}
\address{Department of Mathematics, Niigata University, Niigata 950-2181,
Japan}
\email{hoshi@math.sc.niigata-u.ac.jp}
\author[M. Kang]{Ming-chang Kang}
\address{Department of Mathematics, National Taiwan University, Taipei, Taiwan}
\email{kang@math.ntu.edu.tw}
\author[A. Yamasaki]{Aiichi Yamasaki}
\address{Department of Mathematics, Kyoto University, Kyoto 606-8502, Japan}
\email{aiichi.yamasaki@gmail.com}
\thanks{{\it Key words and phrases.}
Rationality problem, Noether's problem, cohomological invariants,
unramified cohomology groups.\\
This work was partially supported by JSPS KAKENHI Grant Numbers
 25400027, 16K05059, 19K03418.
}
\subjclass[2010]{Primary 12G05, 14E08.}
\begin{document}

\begin{abstract}
Let $G$ be a finite group, $k$ be a field and
$G\to GL(V_{\rm reg})$ be the regular representation of $G$ over $k$.
Then $G$ acts naturally on the rational function field $k(V_{\rm reg})$
by $k$-automorphisms.
Define $k(G)$ to be the fixed field $k(V_{\rm reg})^G$.
Noether's problem asks whether $k(G)$ is rational
(resp. stably rational) over $k$.

When $k=\bQ$ and $G$ contains a normal subgroup $N$ with $G/H\simeq C_8$ (the cyclic group of order $8$),
Jack Sonn proves that $\bQ(G)$ is not stably rational over $\bQ$,
which is a non-abelian extension of a theorem of Endo-Miyata, Voskresenskii, Lenstra and Saltman for the abelian Noether's problem $\bQ(C_8)$.
Using the method of cohomological invariants, we are able to generalize Sonn's theorem
as follows.
Theorem. Let $G$ be a finite group and $N$ $\lhd$ $G$ such that
$G/N\simeq C_{2^n}$ with $n\geq 3$.
If $k$ is a field satisfying that ${\rm char}\,k=0$ and
$k(\zeta_{2^n})/k$ is not a cyclic extension where $\zeta_{2^n}$ is a primitive $2^n$-th root of unity,
then $k(G)$ is not stably rational (resp. not retract rational) over $k$. Besides, we will also show that $k(G)$ is not stably rational (resp. not retract rational) over $k$ provided that $G$ is a finite group whose $2$-Sylow subgroup is isomorphic to the generalized quaternion group of order $16$ and $k$ is the real quadratic field $\bQ(\sqrt{d})$ where $d$ is a square-free integer of the form $1+8m$ for some integer $m \ge 2$. \end{abstract}

\maketitle


%
\section{Introduction}\label{s1}

Let $G$ be a finite group, $k$ be a field and
$\rho:G\to GL(V_{\rm reg})$ be the regular representation of
$G$ over $k$ where $V_{\rm reg}=\bigoplus_{g\in G}\,k\cdot e_g$
is the regular representation space and $h\cdot e_g=e_{hg}$ for
any $g,h\in G$. Let $k(V_{\rm reg})$ be the function field of $V_{\rm reg}$.
Define $k(G):=k(V_{\rm reg})^G=\{f \in k(V_{\rm reg}): \sigma \cdot f=f$ for any $\sigma \in G \}$, the fixed field of $k(V_{\rm reg})$
under the action of $G$.
Noether's problem asks whether $k(G)$ is stably rational over $k$.

The answer to Noether's problem depends on the group $G$ and on
the field $k$.
For examples, Swan shows that $\bQ(C_{47})$ is not stably rational
over $\bQ$ where $C_n$ denotes the cyclic group of order $n$,
and it is well-known that, if $G$ is a finite abelian group of
exponent $e$ and $k$ is a field containing a primitive $e$-th root
of unity, then $k(G)$ is rational over $k$ (Fischer's Theorem).
On the other hand, Saltman shows that, for any prime number $p$,
there is a non-abelian group $G$ of order $p^9$ such that
$\bC(G)$ is not stably rational over $\bC$ \cite{Sa3}.
We recommend Swan's paper \cite{Sw} for a survey of Noether's problem.

Suppose that $\rho:G\to GL(V)$ is a faithful representation of $G$
over $k$. Then $k(G)$ is stably rational over $k$ if and only if
so is $k(V)^G$ over $k$, by the No-Name Lemma
(see, for examples, \cite[Theorem 5.1]{Ka2}).
Thus some authors would formulate Noether's problem in the following equivalent form : whether $BG$ is
stably rational over $k$ \cite[page 283]{Me} where $BG$ denotes the classifying space of $G$ over $k$; for the definition of the classifying space, see \cite{To}.

\bigskip
Now we turn to the stable rationality of $k(C_{2^n})$.

It is Endo and Miyata, Voskresenskii
who prove that $\bQ(C_{2^n})$ is not stably rational over $\bQ$ if $n\geq 3$
\cite[Corollary 3.1]{EM}. If $k$ is a field satisfying that $k(\zeta_{2^n})/k$
is not a cyclic extension, it is also known that $k(C_{2^n})$ is not stably rational over $k$ (see Lenstra's paper \cite[page 310, Proposition 3.2]{Le} and the proof of the main theorem on page 319 therein). We remark that $k(C_{2^n})$ is retract rational over $k$ if and only if either ${\rm char}\,k=2$ or $k(\zeta_{2^n})/k$
is a cyclic extension (Saltman); see, for examples, \cite[Theorem 3.7]{Ka2}.

\medskip
A new perspective of the above theorem arose from an alternative proof found by Saltman \cite[Theorem 5.11]{Sa1}, who related the negation of the stable rationality of $\bQ(C_{2^n})$ ($n\geq 3$) to the non-existence of a degree $8$ cyclic unramified extension of $\bQ_{2}$ (the $2$-adic local field), i.e. Shianghaw Wang's counter-example to Grunwald's theorem \cite{Wa}.

Following the line of Saltman's idea, Jack Sonn found an unexpected extension to the non-abelian situation.

\bigskip
\begin{theorem}[{Sonn \cite{So}}]\label{t1.1}
Let $G$ be a finite group containing a normal subgroup $N$ such that
$G/N\simeq C_8$.
Then $\bQ(G)$ is not stably rational over $\bQ$.
\end{theorem}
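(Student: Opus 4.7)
The plan is to use cohomological invariants as obstructions to stable rationality. Recall that if $\bQ(G)$ is stably rational over $\bQ$, then, by the No-Name Lemma and a standard specialization argument, every cohomological invariant of $G$ over $\bQ$ (in the sense of Serre) must be a constant, i.e.\ lie in the image of $H^*(\bQ,A)$; equivalently, the generic $G$-torsor $\mathrm{Spec}\,\bQ(V_{\rm reg}) \to \mathrm{Spec}\,\bQ(G)$ annihilates every non-constant element of $\mathrm{Inv}(G,A)$. So it suffices to exhibit a single non-constant cohomological invariant of $G$ defined over $\bQ$.

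The key input is Saltman's reinterpretation \cite[Theorem 5.11]{Sa1} of the Endo--Miyata/Voskresenskii theorem: the failure of stable rationality of $\bQ(C_8)$ is witnessed by an explicit non-constant cohomological invariant $\alpha$ of $C_8$ defined over $\bQ$, whose non-vanishing encodes Wang's counterexample to Grunwald's theorem. The surjection $\pi : G \twoheadrightarrow G/N \cong C_8$ is functorial on Galois torsors --- any $G$-Galois étale algebra $M/K$ admits a canonical $C_8$-Galois quotient $M^N/K$ --- so it induces a pullback on cohomological invariants $\pi^* : \mathrm{Inv}(C_8,A) \to \mathrm{Inv}(G,A)$ given by $\pi^*\alpha(M/K) = \alpha(M^N/K)$. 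The strategy is then to show that $\pi^*\alpha$ is non-constant and invoke the criterion above.

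The crux, and the main obstacle, is precisely showing $\pi^*\alpha \neq 0$. One might be tempted to argue by a global embedding problem --- lift a $C_8$-extension $L/K$ with $\alpha(L/K) \neq 0$ to a $G$-extension $M/K$ with $M^N = L$ through the exact sequence $1 \to N \to G \to C_8 \to 1$ --- but such embedding problems need not be solvable in general. Instead, I would evaluate $\pi^*\alpha$ directly on the generic $G$-torsor $\bQ(V_{\rm reg})/\bQ(G)$: its $C_8$-quotient $\bQ(V_{\rm reg})^N/\bQ(V_{\rm reg})^G$ is a versal $C_8$-torsor (up to stable birational equivalence of the base), so $\alpha$ evaluated there agrees, up to constants, with the generic value of $\alpha$ on $C_8$, which is non-zero by Saltman's theorem. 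Hence $\pi^*\alpha$ is non-constant, and $\bQ(G)$ is not stably rational over $\bQ$; the same argument in fact shows that $\bQ(G)$ is not retract rational, since cohomological invariants obstruct retract rationality as well.
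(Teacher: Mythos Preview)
Your opening claim is wrong: stable (or even retract) rationality of $\bQ(G)$ does \emph{not} force every cohomological invariant of $G$ to be constant. For instance $\bQ(\bZ/2\bZ)$ is rational over $\bQ$, yet the identity $H^1(K,\bZ/2\bZ)\to H^1(K,\bZ/2\bZ)$ is a non-constant degree-one invariant; likewise the Stiefel--Whitney classes give non-constant invariants of $S_n$ although $\bQ(S_n)$ is rational. The correct statement (Theorem~\ref{t2.3}) is that retract rationality of $k(G)$ forces every \emph{unramified} normalized invariant to vanish. So you must take $\alpha$ to be an unramified normalized invariant of $C_8$ --- which is indeed what the failure of rationality of $\bQ(C_8)$ supplies --- and then check that $\pi^*\alpha$ remains unramified for $G$; this last point is standard (see \cite[Proposition~13.1]{GMS}) but you omit it entirely, and without it the obstruction does not apply.

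With that correction your versality argument can be made to work, though ``up to stable birational equivalence of the base'' is not the right justification: the $N$-invariants $W:=(V_{\rm reg})^N$ form the regular $C_8$-representation, and the $G$-equivariant inclusion $W\hookrightarrow V_{\rm reg}$ gives a section of $V_{\rm reg}/G\to W/C_8$ along which $\pi_*T_G$ restricts to the versal $C_8$-torsor, so $\pi_*T_G$ is itself versal and $\pi^*\alpha\neq 0$ follows. The paper, however, takes a more structural route to the stronger Theorem~\ref{t1.2} (of which Theorem~\ref{t1.1} is the case $k=\bQ$, $n=3$): it uses Bailey's computation (Theorem~\ref{t2.6}) of the unramified decomposable degree-two invariants of $G^{ab}$ together with Lemma~\ref{l3.1}, which shows directly --- via the isomorphism $H^1(G^{ab},\bQ/\bZ)\cong H^1(G,\bQ/\bZ)$ and the identification of decomposable invariants with $H^1(G,\bQ/\bZ)\otimes_\bZ k^\times$ --- that the pullback ${\rm Inv}_{{\rm nr},k}^2(G^{ab},\bQ/\bZ(1))_{\rm norm}^{\rm dec}\to{\rm Inv}_{{\rm nr},k}^2(G,\bQ/\bZ(1))_{\rm norm}$ is injective, bypassing versality altogether.
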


\medskip
The proof of Theorem \ref{t1.1} relies heavily on many delicate arithmetic properties of number fields pertaining to the field $\bQ$.
It is not completely obvious whether Theorem \ref{t1.1}
is valid for a field $k$ other than $\bQ$.

The theory of cohomological invariants is developed by
Serre, Rost, Merkurjev etc. (see \cite{GMS} and \cite{Me1}).
By using Serre's Theorem \cite[page 87, Proposition 33.15]{GMS} and its refined form due to Merkurjev \cite{Me} and V. J. Bailey \cite{Ba} (see Theorem \ref{t2.6}),
we are able to generalize Theorem \ref{t1.1} as follows.

\medskip
\begin{theorem}\label{t1.2}
Let $G$ be a finite group containing a normal subgroup $N$
such that $G/N\simeq C_{2^n}$ $($with $n\geq 3$$)$.
If $k$ is a field of ${\rm char}\,k=0$ satisfying that $k(\zeta_{2^n})/k$
is not a cyclic extension where $\zeta_{2^n}$ is a primitive $2^n$-th
root of unity, then $k(G)$ is not stably rational
$($resp. not retract rational$)$ over $k$.
\end{theorem}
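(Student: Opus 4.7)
The plan is to apply Theorem~\ref{t2.6} (Serre--Merkurjev--Bailey) in its contrapositive form: to show that $k(G)$ is not retract rational over $k$, it suffices to exhibit a normalized cohomological invariant of $G$ which takes a non-zero value on some $G$-torsor. The strategy is to manufacture such an invariant by pulling one back from the cyclic quotient $C_{2^n}$.

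By Lenstra's theorem cited in the introduction, the assumption that $k(\zeta_{2^n})/k$ is not cyclic forces $k(C_{2^n})$ to be not retract rational over $k$; through Theorem~\ref{t2.6} in this cyclic case (or through Lenstra's explicit construction) we obtain a non-trivial normalized cohomological invariant $\alpha$ of $C_{2^n}$ over $k$, realized concretely as a non-trivial unramified class akin to the obstruction in Wang's counter-example to Grunwald's theorem. Using the surjection $\pi\colon G \twoheadrightarrow G/N \simeq C_{2^n}$, form the pulled-back invariant by $(\pi^*\alpha)(E):=\alpha(E/N)$ for a $G$-torsor $E\to \operatorname{Spec}(F)$; this is a normalized cohomological invariant of $G$, and the task is to verify that it is non-trivial.

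To do this, exploit the $G$-equivariant embedding $V_{\text{reg}}(G/N) \hookrightarrow V_{\text{reg}}(G)$, $e_{gN} \mapsto \sum_{h\in N} e_{gh}$ (where $G$ acts on $V_{\text{reg}}(G/N)$ through $\pi$), which on fixed fields gives an inclusion $k(C_{2^n}) = k(V_{\text{reg}}(G/N))^{G/N} \hookrightarrow k(V_{\text{reg}}(G))^G = k(G)$. Given a $C_{2^n}$-torsor $E'/F$ with $\alpha(E') \neq 0$, after enlarging $F$ to kill the obstruction in $H^2(F,N)$ we can lift $E'$ to a $G$-torsor $E$ with $E/N \cong E'_F$, and by naturality $(\pi^*\alpha)(E)$ is the image of $\alpha(E')$ under restriction. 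Alternatively, and more structurally, one works directly with the versal $G$-torsor $L=k(V_{\text{reg}}(G))$: then $L^N/k(G)$ is a $C_{2^n}$-torsor over $k(G)$ whose $\alpha$-value should agree with the image of the versal $\alpha$-value under the inclusion above. Because $\alpha$ is an unramified class (a stable birational invariant) and the relevant field extension is built from $G$-invariants of a rational function field, no-name-type arguments ensure that the value survives, so $\pi^*\alpha \neq 0$.

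The main obstacle is precisely this survival step: verifying that the non-trivial unramified class for $C_{2^n}$ persists after restriction to $k(G)$. Since the complement of $V_{\text{reg}}(G/N)$ inside $V_{\text{reg}}(G)$ carries a non-trivial $G$-action on which $N$ acts faithfully, $k(G)/k(C_{2^n})$ is not simply a purely transcendental extension, so one must carefully separate the $N$-action from the $G/N$-action --- essentially reducing the question to the no-name lemma applied to the $N$-action once the $C_{2^n}$-quotient has been fixed --- in order to conclude the needed injectivity on unramified cohomology. Once this is handled, Theorem~\ref{t2.6} delivers the conclusion that $k(G)$ is not retract rational, hence not stably rational, over $k$.
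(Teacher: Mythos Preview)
Your overall strategy---pull back a non-trivial unramified normalized invariant from the cyclic quotient $C_{2^n}$ via the surjection $\pi\colon G\to G/N$, then invoke Theorem~\ref{t2.3}---is exactly the paper's strategy. (Note: Theorem~\ref{t2.6} is Bailey's computation for abelian groups, not the retract-rationality criterion; the latter is Theorem~\ref{t2.3}.) The fact that $\pi^\ast$ preserves normalized and unramified invariants is a formal consequence of the functoriality in \cite[page 33]{GMS}, so the only issue, as you correctly identify, is showing that $\pi^\ast\alpha\neq 0$.

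Here your argument has a genuine gap. Your first approach (enlarge $F$ to kill the lifting obstruction in $H^2(F,N)$, then lift $E'$ to a $G$-torsor) does not work as stated: after base change to $F'$ the value $\alpha(E'_{F'})$ is the \emph{restriction} of $\alpha(E')$, and there is no reason this restriction is non-zero. Your second approach (embed $k(C_{2^n})\hookrightarrow k(G)$ and transport the versal value) runs into the problem you yourself flag: $k(G)/k(C_{2^n})$ is not purely transcendental, and the sketched ``no-name-type'' separation of the $N$-action from the $G/N$-action is not a proof. In general, for a surjection $\pi\colon G\twoheadrightarrow H$ the map $\pi^\ast$ on invariants need not be injective, precisely because $H$-torsors need not lift to $G$-torsors over the same base.

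The paper bypasses all of this by restricting attention to \emph{decomposable} degree-two invariants (Definition~\ref{d2.5}). These are parametrized, via Bailey's injection, by $H^1(-,\bQ/\bZ)\otimes_{\bZ}k^\times$. Since the inflation $H^1(G^{ab},\bQ/\bZ)\to H^1(G,\bQ/\bZ)$ is an isomorphism, the map $\Phi\colon{\rm Inv}^2_k(G^{ab},\bQ/\bZ(1))^{\rm dec}_{\rm norm}\to{\rm Inv}^2_k(G,\bQ/\bZ(1))^{\rm dec}_{\rm norm}$ is an isomorphism on the nose; one then checks (Lemma~\ref{l3.1}) that $\Phi$ agrees with $f^\ast$ and hence carries unramified invariants to unramified invariants. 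Theorem~\ref{t2.6} supplies a non-zero unramified decomposable invariant of $G^{ab}$ under the hypothesis on $k(\zeta_{2^n})/k$, and injectivity is now automatic. The moral: injectivity of $\pi^\ast$ is hard to see at the level of torsors, but transparent at the level of the character group $H^1(G,\bQ/\bZ)$ once you work with decomposable invariants.
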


Another situation analogous to Theorem \ref{t1.1} is the following theorem of Serre.

\begin{theorem}[{Serre \cite[page 92, Theorem 34.7]{GMS}}]\label{t1.4}
If $G$ is a finite group whose $2$-Sylow subgroup is isomorphic to $Q_{16}$ $($the generalized quaternion group of order $16$$)$, then $\bQ(G)$ is not stably rational over $\bQ$.
\end{theorem}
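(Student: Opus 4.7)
My plan is to apply Theorem~\ref{t2.6} (the Merkurjev--Bailey refinement of Serre's theorem on cohomological invariants) to a carefully chosen mod-$2$ invariant of $Q_{16}$. First I would reduce the statement from a general $G$ with $Q_{16}$-Sylow to the case $G=Q_{16}$ itself. The standard reduction uses a transfer/restriction argument on mod-$2$ cohomological invariants: for a Sylow $2$-subgroup $H\leq G$, the restriction $\fn{Inv}^*(G,\bZ/2)\to\fn{Inv}^*(H,\bZ/2)$ admits a section up to a $2$-adic unit, so any non-trivial normalized invariant of $Q_{16}$ lifts to a non-trivial invariant of $G$, and hence by Theorem~\ref{t2.6} obstructs the stable rationality of $\bQ(G)$.

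Next I would construct such an invariant of $Q_{16}$. A natural source is the Stiefel--Whitney classes of an orthogonal representation: $Q_{16}$ admits a faithful real orthogonal representation $\rho:Q_{16}\to O(n)$ (coming, for instance, from the underlying real form of a faithful complex representation), so any $Q_{16}$-Galois algebra $L/F$ acquires an orthogonal structure whose classes $w_i(L/F)\in H^i(F,\bZ/2)$ yield invariants of $Q_{16}$. The task is to pin down a specific degree, conjecturally $j=4$, for which $w_j$ represents a non-trivial element of $\fn{Inv}^j(Q_{16},\bZ/2)$ modulo constants and modulo decomposables coming from lower degrees; this reduces to a computation inside the mod-$2$ cohomology ring $H^*(Q_{16},\bZ/2)$, whose structure is classically known.

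Finally, to apply Theorem~\ref{t2.6} over $k=\bQ$ I must exhibit a $Q_{16}$-Galois extension $L/\bQ$ on which $w_4(L/\bQ)$ is non-zero in $H^4(\bQ,\bZ/2)$. In the spirit of Saltman's use of Wang's counterexample to Grunwald's theorem in the proof of Theorem~\ref{t1.2}, one expects the obstruction to be localised at the prime $2$, where a symbol computation should make $w_4$ detectable. The principal obstacle is precisely this last ingredient: producing a $Q_{16}$-extension of $\bQ$ whose local behaviour at $2$ forces $w_4\neq 0$. Unlike the $C_{2^n}$ situation behind Theorem~\ref{t1.2}, $Q_{16}$ is non-abelian, so realising it as a Galois group of $\bQ$ with prescribed local conditions is a genuine embedding problem, and in my view this is where the technical heart of the argument---as in Serre's original proof in \cite[\S34]{GMS}---must reside.
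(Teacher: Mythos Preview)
Your overall framework---produce a nontrivial mod-$2$ cohomological invariant and invoke the obstruction to retract rationality---is the right shape, but the proposal has two substantive gaps and one misidentification.

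First, the theorem you invoke is wrong: Theorem~\ref{t2.6} is the Bailey computation of \emph{degree-two decomposable} unramified invariants for \emph{abelian} groups; it has no content for $Q_{16}$ or for invariants in degree $>2$. The obstruction result you want is Theorem~\ref{t2.3}. More seriously, Theorem~\ref{t2.3} requires an \emph{unramified} normalized invariant, and you never address this. Stiefel--Whitney classes of an orthogonal representation of $Q_{16}$ give normalized invariants, but they are typically ramified; producing an unramified one is exactly the delicate point. In Serre's argument (followed here in Definition~\ref{d4.8} and Theorem~\ref{t4.9}) the invariant lives in degree $3$, not $4$: for a torsor over $K$ one extracts from the rank-$6$ trace form a class $(c)\cup(-1)\cup(-1)\in H^3(K,\bZ/2\bZ)$, and unramifiedness is secured by the specific identity $(2)\cup(c)=0$ together with \cite[Lemma~32.18]{GMS}. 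A bare Stiefel--Whitney class $w_4$ has no such mechanism.

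Second, your strategy for nontriviality---realise $Q_{16}$ as $\mathrm{Gal}(L/\bQ)$ with prescribed local behaviour at $2$ and read off $w_4(L/\bQ)\neq 0$---is not how the argument runs, and it conflates nontriviality in $H^*(Q_{16},\bZ/2)$ with nontriviality as an invariant over $\bQ$. The paper (and Serre) instead pass to the function field $K=k(x_1,\dots,x_5)$ with $\sum x_i^2+7=0$, show that over $K$ the trace form $2\langle 1\rangle\bot 4\langle -1\rangle$ arises from a genuine $\widetilde{A_6}$-torsor, and then prove $(-1)\cup(-1)\cup(-1)\neq 0$ in $H^3(K,\bZ/2\bZ)$ by a quadratic-form argument (Pfister forms, the subform theorem of Lemma~\ref{l4.7}, and the arithmetic input that $3\langle 1\rangle\bot\langle -7\rangle$ is anisotropic over $\bQ$). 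The passage to $G$ then goes via $\widetilde{A_6}$, whose $2$-Sylow is $Q_{16}$; your restriction--corestriction idea is correct here, but the hard content is the construction and unramifiedness of the degree-$3$ invariant, not an embedding problem for $Q_{16}/\bQ$.
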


Note that the finite groups $G$ satisfying the assumption of the above theorem include $Q_{16}$, $\widetilde{A}_6, \widetilde{A}_7$, $\widehat{S}_4, \widehat{S}_5$ (see \cite[page 90, Example 33.27]{GMS}), and $SL_2(\bF_q)$ where $q \equiv 7$ or $9$ $({\rm mod}\ 16)$(see \cite[page 92]{GMS}). We remark that, if $k$ is a field containing $\bQ$ such that $k(\zeta_8)$ is a cyclic extension of $k$, then  all of $k(Q_{16})$, $k(\widehat{S}_4)$, $k(\widehat{S}_5)$, are rational over $k$ (see \cite{Ka3} and \cite{KZ}).

It seems interesting to find fields $k$ such that Theorem \ref{t1.4} is still valid when we replace $\bQ$ by $k$. Serre indicates one clue to find such a field $k$ : To find a field $k$ and an element $\alpha \in k$ such that $\alpha$ is not a sum of three squares in $k$, but is a sum of three squares in $k(\sqrt{2})$; in the case of $\bQ$, we may choose $\alpha= 7$ (see the proof of \cite[page 85, Proposition 32.25]{GMS}). It is not straightforward to find these fields $k$ except for the easy cases when $k$ is a field extension of $\bQ$ of odd degree or $k$ is a unirational extension of $\bQ$ (see Lemma \ref{l4.2}).

\bigskip
In the following we will show that some real quadratic fields do fulfill the requirement indicated by Serre (and another conditon in Lemma \ref{l4.7}).

\begin{theorem}\label{t1.5}
Let $G$ be a finite group whose $2$-Sylow subgroup is isomorphic to $Q_{16}$ $($the generalized quaternion group of order $16$$)$. Let $k$ be a field containing $\bQ$ such that the quadratic forms $3\langle 1\rangle\bot\langle -7\rangle$ and $8\langle 1\rangle$ are anisotropic over $k$ $($e.g. $k=\bQ$ or $k$ is the real quadratic field $\bQ(\sqrt{d})$ where $d$ is a square-free integer of the form $1+ 8m$ for some integer $m \ge 2$$)$. Then $k(G)$ is not stably rational $($resp. not retract rational$)$ over $k$.
\end{theorem}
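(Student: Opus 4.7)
The plan is to follow the strategy of Serre's proof of Theorem \ref{t1.4} and to isolate exactly which field-theoretic facts about $\bQ$ are being used, then replace them with the hypotheses given on $k$. First I would reduce to the case $G=Q_{16}$. Since $Q_{16}$ is a $2$-Sylow subgroup of $G$, the index $[G:Q_{16}]$ is odd, and standard restriction-corestriction for mod-$2$ cohomological invariants yields a split injection $\fn{Inv}^4(G,\bZ/2)\hookrightarrow\fn{Inv}^4(Q_{16},\bZ/2)$. Combined with Theorem \ref{t2.6} of the paper, this reduces the problem to exhibiting a nonzero degree-$4$ cohomological invariant of $Q_{16}$ over $k$ that vanishes on the trivial torsor; such an invariant will produce a nontrivial unramified class in $H^{4}_{\fn{nr}}(k(G)/k,\bQ/\bZ(3))$, obstructing retract rationality.

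Next I would import Serre's construction of the relevant $\fn{Inv}^{4}$-class for $Q_{16}$. Using the embedding $Q_{16}\hookrightarrow\fn{Spin}_3(k)$ and the Rost/Arason invariant of a $4$-fold Pfister form, one attaches to each $Q_{16}$-torsor $T$ over $k$ the class of a certain $4$-fold Pfister symbol in $H^{4}(k,\bZ/2)$. On the versal torsor this class is, up to a correcting symbol and the choice of the element $7\in k^{\times}/k^{\times 2}$ that Serre uses, the cup product $(2)\cup(-1)\cup(-1)\cup(7)\in H^{4}(k,\bZ/2)$. The entire content of the theorem is that this class is nonzero.

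The hard step, and the main obstacle, is the non-vanishing of that symbol over the fields $k$ in question. By Voevodsky's resolution of the Milnor conjecture, the symbol $(2)(-1)(-1)(7)$ is nonzero in $H^{4}(k,\bZ/2)$ if and only if the $4$-fold Pfister form $\langle\langle 2,-1,-1,7\rangle\rangle$ is anisotropic. I would argue by contradiction: an isotropic (hence hyperbolic) Pfister form $\langle\langle 2,-1,-1,7\rangle\rangle$ would, via the Cassels-Pfister subform theorem, force $\langle\langle -1,-1,7\rangle\rangle$ to represent $2$ and, chasing through the standard chain of representations of Pfister subforms, would force either that $7$ is a sum of three squares in $k$ (contradicting the hypothesis that $3\langle 1\rangle\bot\langle -7\rangle$ is anisotropic) or that eight squares sum to zero nontrivially in $k$ (contradicting the hypothesis that $8\langle 1\rangle$ is anisotropic). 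This is exactly where the two quadratic-form hypotheses enter; the bookkeeping should be the content of Lemma \ref{l4.7}, which I would then apply as a black box.

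Finally I would verify that $k=\bQ(\sqrt{d})$ with $d=1+8m$ square-free and $m\ge 2$ meets both hypotheses. The form $8\langle 1\rangle$ is anisotropic because $d>0$ makes $k$ formally real. For $3\langle 1\rangle\bot\langle -7\rangle$, the congruence $d\equiv 1\pmod 8$ makes $2$ split in $\bQ(\sqrt{d})/\bQ$, so both completions at $2$ equal $\bQ_{2}$; since $7\equiv 7\pmod 8$ is not a sum of three squares in $\bQ_{2}$ (equivalently, in $\bQ$), the Hasse-Minkowski principle prevents $7$ from being a sum of three squares in $k$. Combined with the previous paragraph, this completes the proof.
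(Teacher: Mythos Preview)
Your proposal has several genuine gaps that make it unworkable as written.

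\medskip
\textbf{Wrong degree and wrong construction.} Serre's invariant for $Q_{16}$ (via $\widehat{S}_4$ and $\widetilde{A}_6$, as in \cite[\S\S33--34]{GMS}) lives in degree~$3$, not degree~$4$. The Rost invariant of $\fn{Spin}_3\simeq SL_2$ is a degree-$3$ invariant; there is no ``Rost/Arason invariant of a $4$-fold Pfister form'' attached to $\fn{Spin}_3$ in the way you describe. The paper accordingly works with the unramified invariant $e\in\fn{Inv}^3_{\fn{nr},k}(\widetilde{A}_6,\bZ/2\bZ)_{\rm norm}$ of Definition~\ref{d4.8}, whose value on the relevant torsor is $(-1)\cup(-1)\cup(-1)\in H^3(K,\bZ/2\bZ)$, where $K$ is the function field of the affine quadric $x_1^2+\cdots+x_5^2+7=0$ over $k$ --- not a class in $H^4(k,\bZ/2\bZ)$.

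\medskip
\textbf{The proposed symbol vanishes identically.} Even setting the degree issue aside, the class $(2)\cup(-1)\cup(-1)\cup(7)$ is \emph{always} zero: since $2=\fn{Norm}_{k(\sqrt{-1})/k}(1+\sqrt{-1})$, the quaternion algebra $\bigl(\frac{2,-1}{k}\bigr)$ splits and $(2)\cup(-1)=0$ in $H^2(k,\bZ/2\bZ)$ for every field $k$ of characteristic $\neq 2$. (The paper itself records this in Step~2 of the proof of Theorem~\ref{t4.9}.) So your putative obstruction is trivially zero and cannot detect anything.

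\medskip
\textbf{Where the hypotheses actually enter.} In the paper's argument the two anisotropy hypotheses are used as follows. One shows $e_K(\varphi)=(-1)\cup(-1)\cup(-1)\neq 0$ in $H^3(K,\bZ/2\bZ)$ by proving $-1$ is not a sum of four squares in $K$. If it were, then $8\langle 1\rangle$ would be hyperbolic over $K$, and the subform theorem (Lemma~\ref{l4.7}, applied with $\psi=8\langle 1\rangle$ over $k$ and $\varphi=5\langle 1\rangle\bot\langle 7\rangle$) forces $8\langle 1\rangle\simeq 5\langle 1\rangle\bot\langle 7\rangle\bot\varphi'$ over $k$; Witt cancellation then gives $7=3\,\square^2$ in $k$, contradicting anisotropy of $3\langle 1\rangle\bot\langle -7\rangle$. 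The hypothesis that $8\langle 1\rangle$ is anisotropic over $k$ is needed to invoke Lemma~\ref{l4.7}. This is not the ``Cassels--Pfister'' bookkeeping you sketch over $k$ itself; the passage through the function field $K$ is essential.

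\medskip
\textbf{Smaller issues.} Your reduction step is stated backwards: a split injection $\fn{Inv}(G)\hookrightarrow\fn{Inv}(Q_{16})$ does not by itself produce a nonzero invariant of $G$ from one of $Q_{16}$; you need the transfer (corestriction) in the other direction, or rather the specific argument of \cite[Prop.~34.6]{GMS} that the paper cites. Also, the reference to Theorem~\ref{t2.6} is a slip; you mean Theorem~\ref{t2.3}. Your verification that $k=\bQ(\sqrt{d})$ with $d\equiv 1\pmod 8$ satisfies the hypotheses is correct and matches the paper (Lemma~\ref{l4.4} and the real-embedding argument).
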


\bigskip
The proof of Theorem \ref{t1.2} and Theorem \ref{t1.5} will be given in Section \ref{s3} and Section \ref{s4} respectively. For the convenience of the reader, we include a brief introduction to some basic notions of
cohomological invariants in Section \ref{s2}.

\bigskip
In the remaining part of this section, we digress to explain the notion of retract rationality which appears in Theorem \ref{t1.2} and Theorem \ref{t1.5}.
\begin{definition}\label{d1.3}

According to Saltman \cite[Definition 3.1]{Sa2},
a field extension $L$ of an infinite field $k$ is called {\it retract rational
over $k$} if there are an affine domain $A$ over $k$ and $k$-algebra morphisms
$\varphi:A\to k[X_1,\ldots,X_n][1/f]$,
$\psi:k[X_1,\ldots,X_n][1/f]\to A$ where $k[X_1,\ldots,X_n]$
is the polynomial ring of $k$, $f\in k[X_1,\ldots,X_n]\setminus\{0\}$
satisfying that\\
{\rm (i)} $L$ is the quotient field of $A$, and\\
{\rm (ii)} $\psi\circ\varphi=1_A$, the identity map of $A$.

\medskip
Saltman's notion of retract rationality is generalized by Merkurjev
by waiving the assumption that the base field $k$ is an infinite field
and is formulated as follows (see \cite[page 280, Proposition 3.1]{Me}):
Let $L/k$ be a field extension.
$L$ is called {\it retract rational over $k$} if there are irreducible
quasi-projective varieties $U$, $Y$, $X$ defined over $k$
and $k$-morphisms $\varphi:U\to Y$,
$\psi:Y\to X$ of algebraic varieties such that\\
{\rm (i)} $L$ is the function field of $X$,\\
{\rm (ii)} $U$ is an open subset of $X$, and $\psi\circ\varphi:U\to X$
is the inclusion map of $U$ into $X$, and\\
{\rm (iii)} $Y$ is an open subset of $\mathbbm{A}^n_k$,
the affine space of dimension $n$ over $k$ for some positive integer $n$.

\medskip
To distinguish the above two definitions, we will called them
(S) retract rational and (M) retract rational where
(S) and (M) stand for Saltman and Merkurjev respectively.
If the base field $k$ is an infinite field, then it is obvious that
``(S) retract rational" $\Rightarrow$ ``(M) retract rational''.
In this note, we will focus on (M) retract rational unless otherwise specified; thus
the prefix (M) is omitted henceforth.

\medskip
It is easy to verify that ``rational'' $\Rightarrow$ ``stably rational'' $\Rightarrow$ ``retract rational'' $\Rightarrow$ ``unirational''.

In the lectures of Serre \cite[page 86]{GMS}, two notions
${\rm Noe}(G/k)$ and ${\rm Rat}(G/k)$ are introduced.

Note that ``${\rm Noe}(G/k)$ is true'' $\Rightarrow$
``$k(G)$ is stably rational'' $\Rightarrow$ ``${\rm Rat}(G/k)$ is true''
$\Rightarrow$ ``$k(G)$ is retract rational''
(see \cite[page 25, the third paragraph]{Ka2}).
\end{definition}

\bigskip
Standing notation.
Throughout this note, we consider finite groups $G$.
If $k$ is a field, we regard $G$ as the constant group scheme over $k$
associated to $G$ \cite[page 332]{KMRT}; thus all the results of cohomological invariants
developed in \cite{GMS}, \cite{Me}, \cite{Ba} may be applied to the
situation where $G$ is a finite group.
We denote by $C_n$ the cyclic group of order $n$.
If $k$ is a field with $\gcd\{n,{\rm char}\,k\}=1$, then
$\zeta_n$ denotes a primitive $n$-th root of unity in some extension
field of $k$.
For emphasis, recall that $k(G)$ is the fixed field
$k(V_{\rm reg})^G=k(x_g:g\in G)^G$ where $h\cdot x_g=x_{hg}$
for all $g,h\in G$.
Remember that the classifying space $BG$ over $k$
is stably rational $($resp. retract rational$)$ over $k$ if and only if $k(G)$ is stably rational
(resp. retract rational) over $k$;
see \cite[page 283]{Me}, \cite[Section 2.2]{Ba}.

\bigskip
Acknowledgments. We thank A. S. Merkurjev who pointed out an erroneous use of Totaro's Theorem \cite[page 99]{GMS} in an early version of this article.

\bigskip
\section{Cohomological invariants}\label{s2}

For a general definition of invariants (the invariant of a smooth algebraic group over a field $k$ in a cycle module), the reader may look into \cite[Section 2]{Me1} or \cite[page 7]{GMS}. In this note, we are concerned only with the cohomological invariants, and the smooth algebraic groups we consider are restricted to finite groups.

\medskip
\begin{definition}\label{d2.1}
Let $k$ be the base field and $G$ be a finite group over $k$, i.e. the finite constant group schemes over the field $k$ \cite[page 332]{KMRT}.

Denote by ${\rm Fields}_{/k}$ the category of field extensions $K$
of $k$, by {\rm Sets}  the category of $($pointed$)$ sets,
by {\rm Abelian Groups} the category of abelian groups.
Let $A$ and $H$ be two functors defined by
\begin{align*}
A:{\rm Fields}_{/k}&\longrightarrow{\rm Sets}\\
K&\longmapsto H^1(K,G):=H^1(\Gamma_K,G)
\end{align*}
where $\Gamma_K={\rm Gal}(K_{\rm sep}/K)$ the absolute Galois group of $K$,
\begin{align*}
H:{\rm Fields}_{/k}&\longrightarrow{\rm Abelian\ Groups}\\
K&\longmapsto H^d(K,\bQ/\bZ(d-1)):=H^d(\Gamma_K,\bQ/\bZ(d-1))
\end{align*}
where $d$ is a positive integer and the Galois cohomology
$H^d(K,\bQ/\bZ(d-1))$ is defined as in
\cite[page 278, Section 2.1]{Me}, \cite[pages 7--8, Section 2.3]{Ba}.

\medskip
An {\it $H$-invariant of $A$}
(or a {\it cohomological invariant of $A$}, in short)
is a natural transformation $a:A\to H$ of the functor
$A$ to the functor $H$.

Explicitly, for any $K\in {\rm Fields}_{/k}$, there is a map
\begin{align*}
a_K:H^1(K,G)\to H^d(K,\bQ/\bZ(d-1))
\end{align*}
satisfying the conditions: If $K\subset K^\prime$ are two objects
in ${\rm Fields}_{/k}$, then the following diagram commutes:
\begin{align*}
\begin{CD}
H^1(K,G) @>a_K>> H^d(K,\bQ/\bZ(d-1))\\
@VVV @VVV\\
H^1(K^\prime,G) @>a_{K^\prime}>> H^d(K^\prime,\bQ/\bZ(d-1)).\\\\
\end{CD}
\end{align*}

\medskip
Since $G$ is the constant group scheme, it follows that $\Gamma_K$ acts trivially on $G(K_{\rm sep}) \simeq G$. Thus we may regard the set $H^1(K, G)$ as the set of all group homomorphisms from $\Gamma_K$ to the finite group $G$. On the other hand, the set $H^1(K, G)$ is naturally bijective to the set of $G$-torsors over ${\rm Spec}(K)$, equivalently, the set of $G$-Galois algebras over $K$ \cite[pages 388--389]{KMRT}. Note that a $G$-Galois algebra $A$ over a field $K$ \cite[page 288]{KMRT} is a $G$-Galois extension $A/K$ of commutative rings in the sense of Galois extensions of commutative rings in \cite[page 80--84]{DI}.

\medskip
In $H^1(k,G)$, there is a distinguished element
$\varphi_0:\Gamma_k\to G$ which maps every element of $\Gamma_k$
to the identity element of $G$.
A cohomological invariant $a$ is called {\it normalized} if
$a_k(\varphi_0)=0$ in $H^d(k,\bQ/\bZ(d-1))$.

A cohomological invariant $a$ is called {\it constant} if there is a $d$-cocycle $\gamma$ for the group $\Gamma_k$ such that for any field extension $f:k\to K$, $a_K$ is the constant map whose image is
$[f^\ast(\gamma)]$ where $[f^\ast(\gamma)]$ is the cohomological class associated to the cocycle $f^\ast(\gamma)$ and $f^\ast:\Gamma_K\to\Gamma_k$ is the map induced by $f:k\to K$ \cite[page 10]{GMS}.

Every invariant $a$ can be written as (constant)+(normalized)
(see \cite[page 11]{GMS}).

\medskip
The set of all the $H$-invariants of $A$ is denoted by
\begin{align*}
{\rm Inv}_k^d(G,\bQ/\bZ(d-1));
\end{align*}
it is an abelian group under the pointwise addition.

The set of all the normalized $H$-invariants of $A$ is denoted by
\begin{align*}
{\rm Inv}_k^d(G,\bQ/\bZ(d-1))_{\rm norm};
\end{align*}
it is a subgroup of ${\rm Inv}_k^d(G,\bQ/\bZ(d-1))$.
\end{definition}

\bigskip
\begin{definition}\label{d2.2}
Assume that $k$ is a field with ${\rm char}\,k =0$.

An invariant $a\in {\rm Inv}_k^d(G,\bQ/\bZ(d-1))$ is called
{\it unramified} if for any field extension $K/k\in {\rm Fields}_{/k}$,
any $\varphi\in H^1(K,G)$, the image $a_K(\varphi)$ belongs to
the kernel of $r_v:H^d(K,\bQ/\bZ(d-1))\to
H^{d-1}(k_v,\bQ/\bZ(d-2))$ for all $v$ where $v$
is a discrete $k$-valuation on $K$ with residue field $k_v$ (see \cite[page 19]{GMS}).

The map $r_v$ is called the {\it residue map}.

If $k$ is a field with ${\rm char}\,k =p > 0$, the definition of a unramified invariant over $k$ may be found in \cite[page 9]{Ba}, \cite[page 152]{GMS}.

\medskip
The subgroup of all the unramified normalized $H$-invariants of $A$ is
denoted as
\begin{align*}
{\rm Inv}_{{\rm nr},k}^d(G,\bQ/\bZ(d-1))_{\rm norm}.
\end{align*}
\end{definition}

\bigskip
\begin{theorem}[{\cite[page 87, Proposition 33.10]{GMS}, \cite[page 289, Corollary 6.6]{Me}}]\label{t2.3}
If $k(G)$ is retract rational over $k$, then
${\rm Inv}_{{\rm nr},k}^d(G,\bQ/\bZ(d-1))_{\rm norm}=0$.
Consequently, if
${\rm Inv}_{{\rm nr},k}^d(G,\bQ/\bZ(d-1))_{\rm norm}\neq 0$, then
$k(G)$ is not stably rational $($resp. not retract rational$)$ over $k$.
\end{theorem}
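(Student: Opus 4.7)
The plan is to apply Theorem \ref{t2.3} by exhibiting a nonzero unramified normalized cohomological invariant of $G$ in degree $4$ with $\bZ/2$-coefficients. First I would reduce to the case $G = Q_{16}$ by the Sylow-subgroup inheritance principle for cohomological invariants at the prime $2$: since $Q_{16}$ is a $2$-Sylow of $G$ and the coefficients are $2$-primary, a nonzero unramified normalized invariant of $Q_{16}$ over $k$ yields one for $G$ by a standard transfer argument. Thus I assume $G = Q_{16}$ from now on.

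For $G = Q_{16}$, I would take the degree-$4$ invariant $a \in {\rm Inv}^4_k(Q_{16}, \bZ/2)_{\rm norm}$ constructed by Serre in the proof of \cite[page 92, Theorem 34.7]{GMS}; the construction is purely group-theoretic and makes sense over any base field of characteristic zero. The two anisotropy hypotheses then play complementary roles. Anisotropy of $8\langle 1\rangle$ over $k$ is used to show that $a$ is unramified, and this is exactly the content of the forthcoming Lemma \ref{l4.7}: the hypothesis guarantees sufficient ``positivity'' of the Witt ring of $k$ to force every residue $r_v: H^4(K, \bZ/2) \to H^3(k_v, \bZ/2)$ to vanish on the image of $a_K$, for every discrete valuation $v$ on every extension $K/k$. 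Anisotropy of $3\langle 1\rangle \bot \langle -7\rangle$---equivalently, the statement that $7$ is not a sum of three squares in $k$---is then used to show $a\neq 0$ over $k$: following Serre, one produces a $Q_{16}$-Galois algebra over a suitable $K \supseteq k$ on which $a_K$ realizes the cup product $(-1)\cup(-1)\cup(-1)\cup(7) \in H^4(K, \bZ/2)$, and this class is nontrivial precisely when the $4$-fold Pfister form $\langle\langle -1,-1,-1,7\rangle\rangle$ is anisotropic over $K$, which is implied by the anisotropy of its subform $3\langle 1\rangle \bot \langle -7\rangle$ (via the Arason--Pfister Hauptsatz).

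Finally, one must verify both anisotropy conditions for the example $k = \bQ(\sqrt{d})$, $d = 1 + 8m$ squarefree with $m \ge 2$. Anisotropy of $8\langle 1\rangle$ is immediate since $k$ embeds in $\bR$. Anisotropy of $3\langle 1\rangle \bot \langle -7\rangle$ reduces, via the Hasse--Minkowski principle over $k$, to a local Hilbert-symbol computation at the completions of $k$ above the prime $2$: the congruence $d \equiv 1 \pmod 8$ is calibrated so that such completions inherit the $\bQ_2$-obstruction to $7$ being a sum of three squares. The main obstacle is the unramifiedness step: Serre's original proof over $\bQ$ relies on specific $\bQ_2$-arithmetic, and replacing this by the clean geometric condition ``$8\langle 1\rangle$ anisotropic'' over general $k$ is the technical heart of the argument, to be packaged as Lemma \ref{l4.7}.
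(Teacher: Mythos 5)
Your proposal does not prove the statement it was assigned. Theorem \ref{t2.3} is the general bridge ``retract rationality of $k(G)$ $\Rightarrow$ vanishing of ${\rm Inv}_{{\rm nr},k}^d(G,\bQ/\bZ(d-1))_{\rm norm}$'' (for arbitrary $G$, $k$, and degree $d$), and its contrapositive. What you have written is instead an outline of the proof of Theorem \ref{t1.5} (via Theorem \ref{t4.9} and Lemma \ref{l4.4}): you construct a nonzero unramified invariant for groups with $2$-Sylow subgroup $Q_{16}$ and then \emph{invoke} Theorem \ref{t2.3} as a known tool. Your very first sentence --- ``The plan is to apply Theorem \ref{t2.3}'' --- makes the argument circular relative to the task: you cannot establish Theorem \ref{t2.3} by applying it. Nothing in your text addresses why retract rationality kills unramified normalized invariants; the quadratic-form conditions, the group $Q_{16}$, the degree-$4$ class, and the field $\bQ(\sqrt{d})$ are all irrelevant to the statement at hand.

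For the record, the paper does not give its own argument either: it cites \cite[page 87, Proposition 33.10]{GMS} and \cite[page 289, Corollary 6.6]{Me}. A proof sketch in the spirit of those references would go roughly as follows: a normalized invariant $a$ is determined by its value on a versal $G$-torsor, whose base has function field $k(G)$ (up to the No-Name Lemma); if $a$ is unramified, then $a$ evaluated at the versal torsor is an unramified class in $H^d(k(G),\bQ/\bZ(d-1))$; retract rationality of $k(G)$ over $k$ forces every unramified class to be constant, i.e.\ to come from $H^d(k,\bQ/\bZ(d-1))$, and a constant normalized invariant is zero. The second sentence of the theorem is then immediate from ``stably rational $\Rightarrow$ retract rational.'' That is the argument you needed to reconstruct (or at least to identify as the cited content), not the construction of a particular invariant for $Q_{16}$.
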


\medskip
Using Theorem \ref{t2.3}, we may prove that $\bQ(C_8)$ is
not retract rational over $\bQ$ by showing the non-triviality of ${\rm Inv}_{{\rm nr},k}^2(G,\bQ/\bZ(1))_{\rm norm}$ where $G=C_8$; this is carried out in \cite[page 292, Example 8.3]{Me}, \cite[Section 4.1]{Ba} (and also in \cite[pages 87--88, Proposition 33.15]{GMS}). Such proofs are different from those of the original proofs of
Endo-Miyata \cite{EM}, Lenstra \cite{Le}, Saltman \cite{Sa1}.

\medskip
\begin{theorem}\label{t2.4}
{\rm (1) ({\cite[page 290, Proposition 7.1]{Me}, \cite[Chapter 3]{Ba}})}
\begin{align*}
{\rm Inv}_k^1(G,\bQ/\bZ)_{\rm norm}&\simeq H^1(G,\bQ/\bZ),\\
{\rm Inv}_k^2(G,\bQ/\bZ(1))_{\rm norm}&\simeq H^2(G,k^\times).
\end{align*}
{\rm (2) (V. J. Bailey \cite[page 31]{Ba})}
${\rm Inv}_{{\rm nr},k}^1(G,\bQ/\bZ)_{\rm norm}=0$.
\end{theorem}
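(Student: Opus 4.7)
The plan is to establish the two identifications in part (1) by constructing natural candidate maps and invoking the versal-torsor principle, after which part (2) reduces to an elementary Kummer-theoretic ramification calculation.

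For part (1), given $\chi \in H^1(G, \bQ/\bZ) = \Hom(G, \bQ/\bZ)$, I would define $a_\chi$ by $(a_\chi)_K([\varphi]) := \chi \circ \varphi$ for $[\varphi] \in H^1(K, G) = \Hom(\Gamma_K, G)/\mathrm{conj}$; functoriality in $K$ and normalization are immediate. Similarly, for $[c] \in H^2(G, k^\times)$ represented by a $2$-cocycle, I would define $a_{[c]}$ by sending $[\varphi]$ to the image of $\varphi^\ast[c] \in H^2(\Gamma_K, k^\times)$ under
\[ H^2(\Gamma_K, k^\times) \longrightarrow H^2(\Gamma_K, K_{\mathrm{sep}}^\times) = \mathrm{Br}(K) = H^2(K, \bQ/\bZ(1)). \]
Both maps are clearly functorial and normalized. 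To show they are isomorphisms, I would use the versal $G$-torsor $V_{\mathrm{reg}} \to V_{\mathrm{reg}}/G$, corresponding to a surjection $\pi : \Gamma_{k(G)} \twoheadrightarrow G$: by Merkurjev's specialization principle \cite[Section 7]{Me} and Bailey's presentation \cite[Chapter 3]{Ba}, the evaluation map $a \mapsto a_{k(G)}(\pi)$ is injective on normalized invariants, and a direct calculation identifies its image with the inflation of $H^d(G, -)$, which in our two degrees recovers $\Hom(G, \bQ/\bZ)$ and $H^2(G, k^\times)$ respectively.

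For part (2), I would use part (1) to write any normalized unramified invariant as $a_\chi$ for some $\chi \in \Hom(G, \bQ/\bZ)$, and assume for contradiction that $\chi \neq 0$ has order $n$. Choose $g \in G$ with $\chi(g)$ generating the image $\tfrac{1}{n}\bZ/\bZ$, set $m := \mathrm{ord}(g)$, and form $K := k(\zeta_m)(t)$ with discrete $k$-valuation $v := (t)$. The Kummer extension $K(\sqrt[m]{t})/K$ is totally ramified of degree $m$ at $v$, giving a homomorphism $\varphi : \Gamma_K \twoheadrightarrow \langle g \rangle \subset G$ whose inertia image $\varphi(I_v)$ equals $\langle g \rangle$. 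Then $(\chi \circ \varphi)|_{I_v}$ equals $\chi|_{\langle g \rangle}$, which is nonzero since $\chi(g) \neq 0$, so $(a_\chi)_K(\varphi) = \chi \circ \varphi \in H^1(K, \bQ/\bZ)$ is ramified at $v$, contradicting the unramifiedness of $a_\chi$.

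The delicate step is surjectivity in part (1): writing down the candidate maps is routine, but showing every normalized invariant is of the form $a_\chi$ or $a_{[c]}$ rests on the versal-torsor/specialization machinery---precisely the area flagged in the Acknowledgments as having been misapplied via Totaro's theorem in an earlier draft. The safest route is to invoke the refined formulations of Merkurjev \cite{Me} and Bailey \cite{Ba} directly rather than reconstruct them from scratch; once part (1) is in hand, part (2) is the short Kummer computation above.
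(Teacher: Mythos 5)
The paper offers no proof of Theorem \ref{t2.4} at all --- it is stated purely as a citation of Merkurjev \cite[Proposition 7.1]{Me} and Bailey \cite[Chapter 3, page 31]{Ba} --- so there is no internal argument to compare yours against. Judged on its own merits, your sketch is essentially sound but uneven in what it actually proves. For part (1), the two candidate maps $\chi\mapsto a_\chi$ and $[c]\mapsto a_{[c]}$ are the correct standard constructions (and they are well defined on conjugacy classes of $\varphi$, since $\chi$ kills inner automorphisms and inner automorphisms act trivially on $H^2(G,k^\times)$); but the substance of the theorem is precisely the bijectivity, and your argument for that is ``invoke the versal-torsor/specialization machinery of \cite{Me} and \cite{Ba}.'' That is honest, but it means your proof of (1) is still a citation in slightly different clothing, not an independent verification --- which is acceptable here, since the paper does exactly the same.

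Part (2) is where you genuinely add something: the Kummer construction $K=k(\zeta_m)(t)$, $\varphi:\Gamma_K\twoheadrightarrow\mathrm{Gal}(K(\sqrt[m]{t})/K)\simeq\langle g\rangle$, with $\varphi(I_v)=\langle g\rangle$ for $v=(t)$, correctly shows that $a_\chi$ has nonzero residue at $v$ whenever $\chi\neq 0$, since for $d=1$ the residue map $r_v:H^1(K,\bQ/\bZ)\to H^0(k_v,\bQ/\bZ(-1))$ vanishes exactly on characters trivial on inertia. Two small caveats: (i) this argument needs $\gcd(m,\mathrm{char}\,k)=1$ (or at least $\gcd(n,\mathrm{char}\,k)=1$ after replacing $g$ by a suitable power so that $\chi(g)$ still generates the image), so in positive characteristic dividing the order of $\chi$ you would need an Artin--Schreier--Witt substitute; the paper's Definition \ref{d2.2} of unramifiedness is only spelled out for $\mathrm{char}\,k=0$, so this is a boundary case rather than an error, but it should be flagged since the theorem as stated carries no characteristic hypothesis. (ii) You should say explicitly that exhibiting a single $(K,\varphi,v)$ with nonzero residue suffices, because unramifiedness in Definition \ref{d2.2} quantifies over all extensions and all discrete $k$-valuations. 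With those points addressed, part (2) is a complete elementary proof, which is more than the paper provides.
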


\bigskip
\begin{definition}[{V. J. Bailey \cite[page 34]{Ba}}]\label{d2.5}
By Theorem \ref{t2.4}, we have
${\rm Inv}_k^1(G,\bQ/\bZ)_{\rm norm}\simeq H^1(G,\bQ/\bZ)\simeq H^2(G,\bZ)$
where the last isomorphism is given by the map
$\delta:H^1(G,\bQ/\bZ)\simeq H^2(G,\bZ)$, the connecting homomorphism associated to the short exact sequence
$0\to \bZ\to \bQ\to \bQ/\bZ\to 0$.
Since we have the cup product
\begin{align*}
\cup:H^2(G,\bZ)\otimes_\bZ H^0(G,k^\times)\to
H^2(G,k^\times)\simeq {\rm Inv}^2_k(G,\bQ/\bZ(1))_{\rm norm}
\end{align*}
which is injective by \cite[page 34, Proposition]{Ba},
we define a {\it degree two decomposable invariant} as an element
in the image of the following map:
\begin{align*}
H^1(G,\bQ/\bZ)\otimes_\bZ k^\times
&\longrightarrow H^2(G,k^\times)\simeq{\rm Inv}^2_k(G,\bQ/\bZ(1))_{\rm norm}\\
\chi\otimes b&\longmapsto \delta(\chi)\cup(b)
\end{align*}
where $(b)\in H^0(G,k^\times)$ is identified with $b\in k^\times$
through the isomorphism $H^0(G,k^\times)\simeq (k^\times)^G=k^\times$.

\medskip
A decomposable invariant is normalized \cite[page 34]{Ba}. The subgroup of decomposable invariants in
${\rm Inv}_k^2(G,\bQ/\bZ(1))_{\rm norm}$ is denoted as
\begin{align*}
{\rm Inv}_k^2(G,\bQ/\bZ(1))_{\rm norm}^{\rm dec}.
\end{align*}

The subgroup of unramified invariants of
${\rm Inv}_k^2(G,\bQ/\bZ(1))_{\rm norm}^{\rm dec}$ is denoted as
\begin{align*}
{\rm Inv}_{{\rm nr},k}^2(G,\bQ/\bZ(1))_{\rm norm}^{\rm dec}.
\end{align*}
\end{definition}

\medskip
\begin{theorem}[{V. J. Bailey}]\label{t2.6}
Let $k$ be a field and $G$ be a finite abelian group.
Let $G_0$ be the $2$-Sylow subgroup of $G$ and write
$G_0\simeq \bZ/2^{d_1}\bZ\oplus\cdots\oplus\bZ/2^{d_t}\bZ$
where $d_1\geq\cdots\geq d_t\geq 0$.\\
{\rm (1)} If ${\rm char}\,k=2$, then
${\rm Inv}_{{\rm nr},k}^2(G,\bQ/\bZ(1))_{\rm norm}^{\rm dec}=0$;\\
{\rm (2)} If ${\rm char}\,k\neq 2$ and $k(\zeta_{2^{d_1}})/k$
is a cyclic extension, then
${\rm Inv}_{{\rm nr},k}^2(G,\bQ/\bZ(1))_{\rm norm}^{\rm dec}=0$;\\
{\rm (3)} If ${\rm char}\,k\neq 2$ and $k(\zeta_{2^{d_i}})/k$ is not a cyclic
extension for $1\leq i\leq e$ $(\leq t)$ while $k(\zeta_{2^{e+1}})/k$ is cyclic,
then
${\rm Inv}_{{\rm nr},k}^2(G,\bQ/\bZ(1))_{\rm norm}^{\rm dec}\simeq(\bZ/2\bZ)^{(e)}$.
\end{theorem}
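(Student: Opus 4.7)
The strategy is to exhibit a nonzero element in ${\rm Inv}_{{\rm nr}, k}^4(G, \bQ/\bZ(3))_{\rm norm}$, taking values in the $2$-torsion subgroup (i.e.\ in mod-$2$ coefficients), and then apply Theorem \ref{t2.3} in degree $d = 4$. Following Serre's construction in \cite[Theorem 34.7]{GMS}, I would build, for each $K \in {\rm Fields}_{/k}$, a natural map
\[
a_K \colon H^1(K, G) \longrightarrow H^4(K, \bZ/2\bZ)
\]
that exploits the $Q_{16}$-structure of the $2$-Sylow subgroup of $G$ and works uniformly for all the groups listed in the theorem ($Q_{16}$, $\widetilde A_6$, $\widetilde A_7$, $\widehat S_4$, $\widehat S_5$, and the relevant $SL_2(\bF_q)$).

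Concretely, the algebraic input is that $Q_{16}$ has abelianization $C_2 \times C_2$ together with a cyclic commutator subgroup of order $4$, so a $Q_{16}$-Galois algebra over $K$ determines a pair of square classes in $H^1(K, \bZ/2)$ plus further quadratic data (the Hasse--Witt invariant of the associated trace form, or equivalently a quaternionic extension class). Serre's recipe assembles these with the constants $(-1), (-7) \in H^1(k, \bZ/2)$, pulled in from $k^\times$, into a $4$-fold cup-product symbol lying in $H^4(K, \bZ/2)$. On a general $G$ the invariant is obtained by applying this recipe to the $Q_{16}$-reduction of a given $G$-torsor.

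The two anisotropy hypotheses are tailored to secure both the non-triviality and the unramifiedness of $a$. Anisotropy of $8\langle 1\rangle = \langle\langle -1, -1, -1\rangle\rangle$ over $k$ implies, by Arason's Pfister theorem, that the symbol $(-1)\cup(-1)\cup(-1)$ is nonzero in $H^3(k, \bZ/2)$, so the degree-$3$ building blocks do not collapse; combined with anisotropy of $3\langle 1\rangle\bot\langle -7\rangle$ — the assertion that $7$ is not a sum of three squares in $k$ — one deduces that the $4$-fold symbol $(-1)\cup(-1)\cup(-1)\cup(-7)$ is nonzero in $H^4(k, \bZ/2)$, and evaluating $a_K$ on a suitable generic $Q_{16}$-torsor produces exactly this nonzero class. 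Since $(-1)$ and $(-7)$ lie in $k^\times$ their residues at every discrete $k$-valuation $v$ of $K$ vanish automatically; the residues of the remaining torsor-dependent factors are dispatched by a case analysis on the ramification of the torsor at $v$, which is formal and transfers from $\bQ$ to $k$.

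The main obstacle will be adapting Serre's fine arithmetic — done over $\bQ$ using the completions $\bQ_p$ and the three-square theorem — to the real quadratic base field $k = \bQ(\sqrt d)$ with $d = 1 + 8m$, $m \ge 2$. The two anisotropy assumptions are precisely the global inputs required: the condition $d \equiv 1 \pmod 8$ forces $k \hookrightarrow \bQ_2$, so local obstructions at the prime $2$ survive to $k$, while the formal reality of $k$ handles the archimedean side. I expect to invoke the auxiliary Lemmas \ref{l4.2} and \ref{l4.7} advertised in the introduction — the first controlling the behaviour of the anisotropy conditions under odd-degree and unirational base extensions, and the second supplying the additional technical hypothesis on $k$ needed to ensure residue vanishing — in order to close the argument.
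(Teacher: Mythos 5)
Your proposal does not address the statement at hand. Theorem \ref{t2.6} is a computation of the group of \emph{degree-two decomposable} unramified normalized invariants ${\rm Inv}_{{\rm nr},k}^2(G,\bQ/\bZ(1))_{\rm norm}^{\rm dec}$ for a finite \emph{abelian} group $G$, with the answer $(\bZ/2\bZ)^{(e)}$ governed by how many of the extensions $k(\zeta_{2^{d_i}})/k$ fail to be cyclic. What you have written is instead a sketch of the proof of Theorem \ref{t1.5} (equivalently Theorem \ref{t4.9}): everything you invoke --- the $Q_{16}$ Sylow structure, the groups $\widetilde A_6$, $\widetilde A_7$, $\widehat S_4$, $\widehat S_5$, the degree-$3$/$4$ symbols $(-1)\cup(-1)\cup(-1)$, the anisotropy of $3\langle 1\rangle\bot\langle -7\rangle$ and $8\langle 1\rangle$, the real quadratic field $\bQ(\sqrt{1+8m})$, and Lemmas \ref{l4.2} and \ref{l4.7} --- belongs to that entirely separate argument and has no bearing on abelian groups or on degree-two decomposable invariants. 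Nothing in your text engages with the quantities $d_1,\dots,d_t$, with cyclicity of $k(\zeta_{2^{d_i}})/k$, or with the claimed isomorphism onto $(\bZ/2\bZ)^{(e)}$.

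For comparison, the paper's actual proof is short and structural: it first splits $G$ into its Sylow subgroups $G_i$ and uses the direct-sum decomposition ${\rm Inv}_{{\rm nr},k}^2(G,\bQ/\bZ(1))_{\rm norm}^{\rm dec}\simeq\bigoplus_i {\rm Inv}_{{\rm nr},k}^2(G_i,\bQ/\bZ(1))_{\rm norm}^{\rm dec}$ from Bailey's thesis, reducing to the case of an abelian $p$-group; when ${\rm char}\,k\neq p$ it quotes Bailey's classification directly, and when ${\rm char}\,k=p$ it observes that $k(G)$ is rational by the Gasch\"utz--Kuniyoshi theorem, so all unramified normalized invariants vanish by Theorem \ref{t2.3}. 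You would need to restart from that framework (or reproduce Bailey's residue computations for decomposable classes $\delta(\chi)\cup(b)$) to prove this statement; the present proposal cannot be repaired into a proof of Theorem \ref{t2.6}.
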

\begin{proof}
Let $p_1,\ldots,p_m$ be the odd prime divisors of $|G|$ and
$G_i$ be the $p_i$-Sylow subgroup of $G$ for $1\leq i\leq m$.
Then $G\simeq\bigoplus_{0\leq i\leq m} G_i$.
Thus we have
\begin{align*}
{\rm Inv}_{{\rm nr},k}^2(G,\bQ/\bZ(1))_{\rm norm}^{\rm dec}\simeq
\bigoplus_{0\leq i\leq m}
{\rm Inv}_{{\rm nr},k}^2(G_i,\bQ/\bZ(1))_{\rm norm}^{\rm dec}
\end{align*}
by \cite[page 42, Lemma 5]{Ba}.
In other words, to prove Theorem \ref{t2.6} we may assume that
$G$ is an abelian $p$-group.

If ${\rm char}\,k\neq p$, we may apply Bailey's theorem in \cite[page 48]{Ba}.
If ${\rm char}\,k=p$, since $k(G)$ is rational over $k$
by Gasch\"utz-Kuniyoshi's Theorem \cite{Ga}, \cite{Ku},
we may apply Theorem \ref{t2.3} and we find that
${\rm Inv}_{{\rm nr},k}^2(G,\bQ/\bZ(1))_{\rm norm}=0$.
\end{proof}

\bigskip
\section{Proof of Theorem \ref{t1.2}}\label{s3}

\begin{lemma}\label{l3.1}
Let $G$ be a finite group and $G^{ab}=G/[G,G]$ where
$[G,G]$ is the commutator subgroup of $G$.
Then the canonical projection $f: G\to G^{ab}$ induces an injective morphism
from ${\rm Inv}_{{\rm nr},k}^2(G^{ab},\bQ/\bZ(1))_{\rm norm}^{\rm dec}$
to ${\rm Inv}_{{\rm nr},k}^2(G,\bQ/\bZ(1))_{\rm norm}$.
\end{lemma}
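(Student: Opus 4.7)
The plan is to realize the claimed map as the pullback along $f$ of cohomological invariants. For $a \in {\rm Inv}_k^2(G^{ab},\bQ/\bZ(1))_{\rm norm}$ and $K/k \in {\rm Fields}_{/k}$, set
\[
(f^*a)_K(\varphi) := a_K(f_* \varphi), \qquad \varphi \in H^1(K,G),
\]
where $f_* \colon H^1(K,G) \to H^1(K,G^{ab})$ is induced by post-composition with $f$. Naturality of $a$ makes $f^*a$ into a cohomological invariant, normalization is preserved because $f_*$ carries the trivial class to the trivial class, and because the residue maps $r_v$ act only on the target, $f^*a$ is unramified whenever $a$ is. Thus $f^*$ furnishes the asserted homomorphism, and only its injectivity remains in question.

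Using Theorem \ref{t2.4}(1) to identify ${\rm Inv}_k^2(-,\bQ/\bZ(1))_{\rm norm}$ with $H^2(-,k^\times)$, the map $f^*$ becomes inflation along $f$. By naturality of the connecting homomorphism and the cup product, a decomposable element $\delta(\chi) \cup (b)$ with $\chi \in H^1(G^{ab},\bQ/\bZ)$ and $b \in k^\times$ is sent to $\delta(f^*\chi) \cup (b)$. The key observation is that $f^*$ is an isomorphism both on $H^1(-,\bQ/\bZ)$ and on $H^2(-,\bZ)$: the first because every character $G \to \bQ/\bZ$ factors uniquely through $G^{ab}$; the second via the connecting isomorphism $\delta \colon H^1(-,\bQ/\bZ) \simeq H^2(-,\bZ)$, which holds for any finite group since $H^n(H,\bQ) = 0$ for $n \geq 1$.

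To conclude injectivity, suppose that $\delta(f^*\chi) \cup (b) = 0$ in $H^2(G,k^\times)$. The injectivity of the cup pairing $H^2(G,\bZ) \otimes_\bZ k^\times \hookrightarrow H^2(G,k^\times)$ recorded in Definition \ref{d2.5} gives $\delta(f^*\chi) \otimes b = 0$ in $H^2(G,\bZ) \otimes k^\times$. Transporting back through the isomorphism $f^* \otimes 1 \colon H^2(G^{ab},\bZ) \otimes k^\times \simeq H^2(G,\bZ) \otimes k^\times$ of the preceding paragraph, one obtains $\delta(\chi) \otimes b = 0$, whence the same cup-product injectivity applied to $G^{ab}$ yields $\delta(\chi) \cup (b) = 0$ in $H^2(G^{ab},k^\times)$. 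Thus $f^*$ is injective (in fact induces an isomorphism) on decomposable invariants, and composing with the tautological inclusion of ${\rm Inv}_{{\rm nr},k}^2(G,\bQ/\bZ(1))_{\rm norm}^{\rm dec}$ into ${\rm Inv}_{{\rm nr},k}^2(G,\bQ/\bZ(1))_{\rm norm}$ produces the injection of Lemma \ref{l3.1}. The argument is essentially functoriality bookkeeping; the only non-formal ingredient is Bailey's injectivity of the cup pairing, invoked on both $G$ and $G^{ab}$, and no further obstacle is expected.
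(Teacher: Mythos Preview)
Your proof is correct and follows essentially the same route as the paper: define the pullback $f^*$ on invariants, observe it preserves normalized and unramified, and use Bailey's injectivity of the cup pairing together with the isomorphism $H^1(G^{ab},\bQ/\bZ)\simeq H^1(G,\bQ/\bZ)$ to conclude that $f^*$ is injective (indeed an isomorphism) on decomposable invariants.

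The one place where the paper is more careful is the compatibility you invoke in the sentence ``Using Theorem~\ref{t2.4}(1) to identify ${\rm Inv}_k^2(-,\bQ/\bZ(1))_{\rm norm}$ with $H^2(-,k^\times)$, the map $f^*$ becomes inflation along $f$.'' The paper does not take this naturality for granted; instead it defines the map $\Phi$ on decomposable invariants directly via $\delta(\overline{\chi})\cup(b)\mapsto \delta(\overline{\chi}\cdot f)\cup(b)$ and then checks, by an explicit cyclic-algebra computation (its Steps~2--4), that $\Phi$ agrees with the invariant-theoretic pullback $f^*$. Your appeal to naturality is legitimate, but be aware that this is precisely the point the authors felt needed an explicit argument. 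A minor notational remark: your injectivity paragraph treats a generic decomposable element as a single term $\delta(\chi)\cup(b)$, whereas such elements are sums in general; your tensor-product argument via $f^*\otimes 1$ handles sums without change, so this is only a matter of phrasing.
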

\begin{proof}
Step 1. Note that ${\rm Hom}(G^{ab},\bQ/\bZ)\simeq{\rm Hom}(G,\bQ/\bZ)$.
In fact, if $\chi:G\to \bQ/\bZ$ is any group homomorphism, then
${\rm Kernel}(\chi)\supset [G,G]$ because the image of $\chi$ is abelian.
Thus $\chi$ factors through $G^{ab}$, i.e. there is a group
homomorphism $\overline{\chi}:G^{ab}\to \bQ/\bZ$ such that $\overline{\chi} \cdot f=\chi$. Conclusion: $\overline{\chi} \mapsto \overline{\chi} \cdot f$ provides an isomorphism for $H^1(G^{ab},\bQ/\bZ) \to H^1(G,\bQ/\bZ)$.

Thus we have a commutative diagram
\begin{align*}
\begin{CD}
H^1(G^{ab},\bQ/\bZ)\otimes_\bZ k^\times @>\simeq >> {\rm Inv}^2_k(G^{ab},\bQ/\bZ(1))^{\rm dec}_{\rm norm}\\
@V\simeq VV @VV\Phi V\\
H^1(G,\bQ/\bZ)\otimes_\bZ k^\times @>\simeq >> {\rm Inv}^2_k(G,\bQ/\bZ(1))^{\rm dec}_{\rm norm}
\end{CD}
\end{align*}
where the horizontal morphisms and the left vertical morphism are isomorphisms. It follows that the right vertical morphism
$\Phi$ is also an isomorphism. Note that $\Phi$ is defined by $\Phi(\delta(\overline{\chi})\cup(b))=\delta(\overline{\chi}\cdot f)\cup(b)$ for any $\overline{\chi}\in H^1(G^{ab},\bQ/\bZ)$, any $b \in k^\times$.

In the remaining part of the proof, we will show that $\Phi$ sends the unramified invariants to the unramified ones. The main idea is to verify that $\Phi$ is essentially the same as the morphism $f^\ast$ defined in \cite[page 33]{GMS} (see Step 3 in the following).

\medskip
Step 2. Recall the definition of a degree two decomposable invariant.

Let $\chi:G\to \bQ/\bZ$ be a group homomorphism and $b \in k^\times$. Consider the invariant $\delta(\chi)\cup(b)$. Write $H$ for the kernel of $\chi$, and let the image of $\chi$ be $\langle
1/n \rangle \subset \bQ/\bZ$. Choose $\sigma \in G/H$ such that $\chi$ induces an isomorphism of $G/H=\langle \sigma \rangle$ onto $\langle 1/n \rangle$ with $\chi(\sigma)=1/n$.

By \cite[pages 37--40]{Ba} $\delta(\chi)\cup(b)$ is defined as follows : for any field extension $K/k$, for any $G$-Galois algebra $L/K$, $(\delta(\chi)\cup(b))_K(L)$ is the similarity class, in ${\rm Br}(K) \simeq H^2(K, \bQ/\bZ(1))$, of the cyclic algebra $[E/K, \sigma, b]$ where $E=L^H$ (the fixed ring of $L$ under $H$). It is known that $E$ is a $G/H$-Galois algebra over $K$ by the fundamental theorem of Galois theory (see \cite[page 80, page 143]{DI}). For the definition of the cyclic algebra $[E/K, \sigma, b]$, see \cite[page 49, page 71]{Dr} and \cite[page 121]{DI}.

\medskip
Step 3.
There is a morphism $f^\ast : {\rm Inv}^2_k(G^{ab},\bQ/\bZ(1))\to {\rm Inv}^2_k(G,\bQ/\bZ(1))$ associated to the group homomorphism $f: G\to G^{ab}$ (see \cite[page 33]{GMS}).

If $a \in {\rm Inv}^2_k(G^{ab},\bQ/\bZ(1))$, then $f^\ast(a) \in {\rm Inv}^2_k(G,\bQ/\bZ(1))$ is defined as follows : for any field extension $K/k$, for any $G$-Galois algebra $L/K$, $f^\ast(a)_K(L):=a_K(L_0)$ where $L_0=L^{[G,G]}$ (the fixed ring of $L$ under the action of $[G,G]$) by the construction of the $G^{ab}$-torsor $T$ in the proof of \cite[page 33, Proposition 13.1]{GMS}. Note that $L_0$ is a $G^{ab}$-Galois algebra over $K$; thus $a_K(L_0)$ is well-defined. Also note that $f^\ast$ sends the normalized invariants (resp. the unramified invariants) to the normalized invariants (resp. the unramified invariants).

\medskip
Step 4.
Consider the following diagram:

\begin{align*}
\begin{CD}
{\rm Inv}^2_k(G^{ab},\bQ/\bZ(1))^{\rm dec}_{\rm norm} @>>> {\rm Inv}^2_k(G^{ab},\bQ/\bZ(1))_{\rm norm}\\
@V\Phi VV @VV f^\ast V\\
{\rm Inv}^2_k(G,\bQ/\bZ(1))^{\rm dec}_{\rm norm} @>>> {\rm Inv}^2_k(G,\bQ/\bZ(1))_{\rm norm}.
\end{CD}
\end{align*}

We will show that the above diagram is a commutative diagram.

For any field extension $K/k$, for any $G$-Galois algebra $L/K$, we will show that $(\delta(\overline{\chi}\cdot f)\cup(b))_K(L)$ and $f^\ast(\delta(\overline{\chi})\cup(b))_K(L)$ represent similar central simple $K$-algebras.

Write $H_0$ for the kernel of $\overline{\chi}$, and let $H$ be the subgroup of $G$ such that $[G,G] \subset H \subset G$ and $H/[G, G] \simeq H_0$. Then $H$ is the kernel of $\overline{\chi}\cdot f$. Define $E:=L^H$ as in Step 2. Then $(\delta(\overline{\chi}\cdot f)\cup(b))_K(L)$ is the similarity class of the cyclic algebra $[E/K, \sigma, b]$. For the notation of the cyclic algebra, see \cite[page 49, page 70]{Dr}; note that the definition $[E/K, \sigma, b]$ is still valid when the field extension $E/K$ is replaced by a Galois algebra (see \cite{DI}). Remember that $H^2(K, \bQ/\bZ(1))$ is isomorphic to the Brauer group ${\rm Br}(K) = H^2(\Gamma_K, K_{\rm sep}^{\times})$.

On the other hand, by Step 3, $f^\ast(\delta(\overline{\chi})\cup(b))_K(L)= (\delta(\overline{\chi})\cup(b))_K(L_0)$ where $L_0=L^{[G,G]}$. Since $L_0^{H_0}=(L^{[G, G]})^{H_0}=L^H=E$, we find that $(\delta(\overline{\chi})\cup(b))_K(L_0)$ is the similarity class of $[E/K, \sigma, b]$. Hence the result.

\medskip
Step 5.
{}From the above commutative diagram, we find that ${\rm Inv}_{{\rm nr},k}^2(G^{ab},\bQ/\bZ(1))_{\rm norm}^{\rm dec}$ is mapped injectively by $f^\ast$ into ${\rm Inv}_k^2(G,\bQ/\bZ(1))_{\rm norm}$. Thus the image of ${\rm Inv}_{{\rm nr},k}^2(G^{ab},\bQ/\bZ(1))_{\rm norm}^{\rm dec}$ lies in ${\rm Inv}_{{\rm nr},k}^2(G,\bQ/\bZ(1))_{\rm norm}$, because $f^\ast$ preserves the unramified invariants. Thus $\Phi$ maps  ${\rm Inv}_{{\rm nr},k}^2(G^{ab},\bQ/\bZ(1))_{\rm norm}^{\rm dec}$ into ${\rm Inv}_{{\rm nr},k}^2(G,\bQ/\bZ(1))_{\rm norm}^{\rm dec}$.
\end{proof}

\bigskip
Proof of Theorem \ref{t1.2} ------------------\\

Let $N$ $\lhd$ $G$ and $G/N\simeq C_{2^n}$ where $n\geq 3$.
It follows that $N\supset [G,G]$ and there is a surjection
$G^{ab}\to G/N\simeq C_{2^n}$.

Let $G_0$ be the $2$-Sylow subgroup of $G^{ab}$ and write
$G_0\simeq \bZ/2^{d_1}\bZ\oplus\cdots\oplus\bZ/2^{d_t}\bZ$
where $d_1\geq \cdots\geq d_t\geq 1$.
Since $G^{ab}$ has a quotient isomorphic to $C_{2^n}$ with $n\geq 3$,
it follows that $d_1\geq n$.

By the assumption that $k(\zeta_{2^n})/k$ is not cyclic,
it follows that $k(\zeta_{2^{d_1}})/k$ is not cyclic.
By Theorem \ref{t2.6},
${\rm Inv}_{{\rm nr},k}^2(G^{ab},\bQ/\bZ(1))_{\rm norm}^{\rm dec}\neq 0$.
Hence
${\rm Inv}_{{\rm nr},k}^2(G,\bQ/\bZ(1))_{\rm norm}\neq 0$
by Lemma \ref{l3.1}.
By Theorem \ref{t2.3}, we find that $k(G)$ is not retract rational over $k$.
\qed

\medskip
\begin{example}\label{e3.2}
In \cite[page 88, Theorem 33.16]{GMS}, Serre proves the following result:
Let $G$ be a finite group whose $2$-Sylow subgroup $P$ is a cyclic group
of order $\geq 8$.
Then ${\rm Rat}(G/\bQ)$ $($resp. ${\rm Noe}(G/\bQ)$$)$ is false.

By Burnside's normal $p$-complement theorem
(see \cite[page 160, Corollary 5.14]{Is}),
$G=N\rtimes P$ where $N$ $\lhd$ $G$ and $|N|$ is odd.
It follows that $G/N\simeq C_{2^n}$ where $n\geq 3$.
Thus we may apply Theorem \ref{t1.2} to obtain an alternative proof of
Serre's theorem when the field $\bQ$ is replaced by a field $k$ such that
${\rm char}\,k=0$ and $k(\zeta_{2^n})/k$ is not cyclic.

The conclusion, for the split case $G=N\rtimes H$ where $N$ and $H$
are any subgroups, is anticipated by Saltman.
In fact, Saltman proves that, for a finite group $G=N\rtimes H$
(where $N$ $\lhd$ $G$), if $k(G)$ is (S) retract rational over an infinite field $k$,
so is $k(H)$ over $k$ (see \cite[page 265, Theorem 3.1 (b)]{Sa1}).
Note that, in \cite{Sa1}, the result is formulated in terms of generic Galois extensions. However, it is known that a group $G$ has a generic Galois extension over $k$ if and
only if $k(G)$ is (S) retract rational over $k$ by \cite[Theorem 3.12]{Sa2} (also see \cite[Theorem 1.2 and pages 28--29]{Ka2}).
\end{example}

\medskip
\begin{example}\label{e3.3}
We will exhibit a non-split extension $1\to N\to G\to G/N\to 1$.
Define $G=\langle\sigma,\tau:\sigma^{64}=\tau^{32}=1,\sigma^{32}
=\tau^{16},\tau^{-1}\sigma\tau=\sigma^7\rangle$, $N=\langle\sigma\rangle$.
Then $G/N\simeq\langle\tau\rangle\simeq C_{32}$.
This example is taken with
the parameters $(r,s,u,v,t,w)=(3,1,0,1,0,1)$
in \cite[Definition 5.5]{Ka1} and \cite[Lemma 5.6 (ii)]{Ka1}.

By Theorem \ref{t1.2}, if $k$ is a field with ${\rm char}\,k=0$ such that
$k(\zeta_{32})/k$ is not cyclic, then $k(G)$ is not retract rational over $k$.
However, if $k':=k(\zeta_{64})$, then  $k'(G)$ is rational over
$k'$ by \cite[Theorem 1.5]{Ka1}.
\end{example}

\bigskip
\section{Proof of Theorem \ref{t1.5}}\label{s4}

We recall some terminology and basic facts of (non-degenerate) quadratic forms first.

Throughout this section, $k$ is a field with ${\rm char}\,k\neq 2$, unless otherwise specified.

When $a_i\in k^\times$, we will write the quadratic form
$q(X_1,\ldots,X_n)=\sum_{1\leq i\leq n}a_iX_i^2$
simply as $q=\langle a_1,\ldots,a_n\rangle$.
For brevity the quadratic form $q=\langle 1,1,1,-7\rangle$
will be written as $3\langle 1\rangle\bot\langle -7\rangle$.
A quadratic form $q=\langle a_1,\ldots,a_n\rangle$ is called
{\it isotropic over $k$} if $\sum_{1\leq i\leq n}a_ix_i^2=0$ for some $x_1,\ldots,x_n\in k$,
not all zero; otherwise, it will be called {\it anisotropic over $k$}.

An element $\alpha\in k^\times$ is
{\it a sum of $d$ squares in $k$}
if $\alpha=\sum_{1\leq i\leq d}x_i^2$ for some $x_i\in k$;
we will indicate this case by writing $\alpha=d\,\square^2$ in $k$ as a shorthand.

As mentioned in Section \ref{s1}, we will find a sufficient condition
for an algebraic number field $k$ such that $7\neq 3\,\square^2$ in $k$, but
$7=3\,\square^2$ in $k(\sqrt{2})$.

Note that $7=1^2+(\sqrt{2})^2+2^2$ in $\bQ(\sqrt{2})$.
Hence, for any field $k$ containing $\bQ$, $7=3\,\square^2$ in $k(\sqrt{2})$. The following lemma, attributed to Gauss, is well-known.

\begin{lemma}[{\cite[pages 173--174, Example 3.12]{La}, \cite[page 45, Appendix]{Se1}}]\label{l4.1}
If $n$ is a positive integer of the form
$7+8m$ for some integer $m\geq 0$, then $n\neq 3\,\square^2$ in $\bQ$.
\end{lemma}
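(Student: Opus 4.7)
The plan is to reduce the rational representation to an integer one and then apply the classical mod $8$ obstruction of Gauss. Suppose for contradiction that $n = x_1^2 + x_2^2 + x_3^2$ with $x_i \in \bQ$. Choosing a common denominator $d \in \bZ_{>0}$ and writing $x_i = a_i/d$, we obtain the integer identity $nd^2 = a_1^2 + a_2^2 + a_3^2$. Decompose $d = 2^k d'$ with $d'$ odd; since $d'^2 \equiv 1 \pmod 8$ we have $nd'^2 \equiv 7 \pmod 8$, and the equation becomes $4^k(nd'^2) = a_1^2 + a_2^2 + a_3^2$.

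Next I would perform a $2$-adic descent. Because squares of integers are congruent to $0$ or $1 \pmod 4$, the only way a sum of three integer squares can vanish modulo $4$ is if every summand is even. Iterating this observation $k$ times on the relation $4^k(nd'^2) = a_1^2 + a_2^2 + a_3^2$ produces integers $c_1, c_2, c_3$ with $nd'^2 = c_1^2 + c_2^2 + c_3^2$.

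Finally I would invoke the mod $8$ obstruction: squares in $\bZ/8\bZ$ lie in $\{0,1,4\}$, so sums of three squares modulo $8$ are contained in $\{0,1,2,3,4,5,6\}$ and can never equal $7$. Since $nd'^2 \equiv 7 \pmod 8$, no such $c_i$ exist, giving the contradiction. The only nontrivial step is the $2$-adic descent, which rests on the parity observation modulo $4$; the rest is routine bookkeeping. Note that only the easy direction of the Gauss--Legendre three-square theorem is needed, not the full classification.
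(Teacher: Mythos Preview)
Your argument is correct: clearing denominators, the $2$-adic descent (using that $a_1^2+a_2^2+a_3^2\equiv 0\pmod 4$ forces all $a_i$ even), and the observation that $\{0,1,4\}+\{0,1,4\}+\{0,1,4\}\subset\bZ/8\bZ$ misses the class $7$ together yield the contradiction. Each step is sound.

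The paper does not supply its own proof of this lemma; it simply cites \cite[pages 173--174]{La} and \cite[page 45, Appendix]{Se1}, where the full three-square theorem is treated via the local--global principle for quadratic forms (Hasse--Minkowski). Your proof is more elementary and entirely self-contained: you use only the easy local obstruction at the prime $2$, bypassing any appeal to Hasse--Minkowski, and you handle the passage from $\bQ$ to $\bZ$ by an explicit descent rather than invoking the Davenport--Cassels lemma or general results on integral representability. Since, as you note, only the easy direction of the Gauss--Legendre theorem is needed here, your direct approach is both adequate and preferable for this application.
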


\begin{lemma}\label{l4.2}
Assume that $7\neq 3\,\square^2$ in a field $k$.
Let $k^\prime$ be a field extension of $k$. Then\\
{\rm (1)} If $[k^\prime:k]$ is an odd integer, then $7\neq 3\,\square^2$
in $k^\prime$.\\
{\rm (2)} If $k\subset k^\prime\subset k(X_1,\ldots,X_n)$
where $k(X_1,\ldots,X_n)$ is the rational function field over $k$,
then $7\neq 3\,\square^2$ in $k^\prime$.
\end{lemma}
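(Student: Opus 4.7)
\textbf{Proof proposal for Lemma \ref{l4.2}.}

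The first step is simply to recast the statement in terms of quadratic forms: the condition $7 \neq 3\,\square^2$ in a field $F$ (of characteristic $\neq 2$) is equivalent to the non-existence of a non-trivial zero of $X_1^2 + X_2^2 + X_3^2 - 7X_4^2$ over $F$ with $X_4 \neq 0$. Since $\langle 1\rangle$ is clearly anisotropic, this is in turn equivalent to the $4$-dimensional quadratic form $q := 3\langle 1\rangle \bot \langle -7\rangle$ being anisotropic over $F$. Thus the whole lemma is the assertion that anisotropy of $q$ is preserved under (1) odd-degree algebraic extensions, and (2) passage to subfields of $k(X_1,\ldots,X_n)$.

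For part (1), the plan is to invoke Springer's classical theorem on odd-degree extensions: if a non-degenerate quadratic form is anisotropic over a field $F$, it remains anisotropic over every finite extension of odd degree. Applied to $q$ and to $k'/k$ (which one may, after a standard reduction, assume to be finite by restricting any hypothetical isotropic vector to the subfield of $k'$ generated over $k$ by its coordinates; this finite subfield still has odd degree over $k$), this immediately gives the conclusion.

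For part (2), I would show the stronger statement that $q$ is anisotropic over the whole rational function field $k(X_1,\ldots,X_n)$; restricting to $k'$ is then automatic. This is the standard specialization argument: by induction on $n$, it suffices to treat $n=1$. Suppose $q(h_1,h_2,h_3,h_4) = 0$ in $k[X]$ with not all $h_i$ zero. Writing $r = \min_i v_X(h_i)$ and replacing each $h_i$ by $h_i/X^r$, one may assume that at least one $h_i(0) \neq 0$; then substitution $X \mapsto 0$ produces a non-trivial zero of $q$ over $k$, contradicting the assumption. (Equivalently, one can quote the Cassels--Pfister substitution principle.)

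There is essentially no serious obstacle: both ingredients are textbook facts in the theory of quadratic forms, and the role of the lemma in the paper is merely to produce a supply of fields $k$ satisfying the hypothesis of Theorem \ref{t1.5} by passing from $\bQ$ (where Lemma \ref{l4.1} supplies $\alpha = 7$) to odd-degree number fields and unirational extensions. The only mild care needed is the reduction in part~(1) from a possibly infinite algebraic extension to a finite one, and the induction on the number of variables in part~(2); the anisotropy of $8\langle 1\rangle$ in the examples of Theorem \ref{t1.5} is handled in exactly the same way and is presumably addressed elsewhere in Section \ref{s4}.
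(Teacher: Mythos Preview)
Your approach is essentially identical to the paper's: both reduce the statement to the anisotropy of $q = 3\langle 1\rangle \bot \langle -7\rangle$, then invoke Springer's theorem for (1) and the standard function-field/specialization fact for (2) (the paper simply cites the relevant results in Lam). One small slip in your reduction: to conclude that $q$ is anisotropic from $7 \neq 3\,\square^2$ you must exclude nontrivial zeros with $X_4 = 0$, which requires $3\langle 1\rangle$ (not merely $\langle 1\rangle$) to be anisotropic; the paper handles this by noting that if $3\langle 1\rangle$ were isotropic it would be universal, forcing $7 = 3\,\square^2$ after all.
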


\begin{proof}
Note that $3\langle 1\rangle$ is anisotropic.
Otherewise, it contains a hyperbolic plane $\langle 1,-1\rangle$,
i.e. $3\langle 1\rangle\simeq\langle 1,-1\rangle\bot\langle -1\rangle$
by \cite[page 13, Theorem 3.4]{La}.
Thus $7=2\,\square^2$ in $k$, because $7$ is represented by
$\langle 1,-1\rangle$.
It follows that $7\neq 3\,\square^2$ in $k$
if and only if $3\langle 1\rangle\bot\langle -7\rangle$
is anisotropic over $k$.

For (1), apply Springer's theorem \cite[page 198, Theorem 2.3]{La}.

For (2), apply \cite[page 256, Corollary 1.2]{La}.
\end{proof}

\begin{lemma}[{Pfister \cite[Satz 5]{Pf}}]\label{l4.3}
Assume that $-1=s\,\square^2$ in a field $k$ where $s$ is a positive integer.
If $2^d\leq s< 2^{d+1}$ for some non-negative integer $d$, then
$-1=2^d\,\square^2$ in the field $k$.
\end{lemma}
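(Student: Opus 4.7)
The plan is to reduce the lemma to Pfister's fundamental theorem that an isotropic Pfister form is hyperbolic. Set $\varphi := 2^d\langle 1\rangle$, the $d$-fold Pfister form $\langle\langle 1,\ldots,1\rangle\rangle$ with $d$ entries equal to $1$, and let $\psi := 2^{d+1}\langle 1\rangle = \varphi \bot \varphi$, the $(d+1)$-fold Pfister form. The hypothesis $-1 = a_1^2 + \cdots + a_s^2$ together with $s+1 \leq 2^{d+1}$ immediately supplies a nontrivial isotropic vector of $\psi$: namely $(1,a_1,\ldots,a_s,0,\ldots,0) \in k^{2^{d+1}}$ (padded with $2^{d+1}-s-1 \geq 0$ zeros), whose value under $\psi$ is $1 + \sum_{i=1}^s a_i^2 = 0$. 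Hence $\psi$ is isotropic over $k$.

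Next I would invoke the well-known theorem of Pfister (see, e.g., Lam \cite{La}) that an isotropic Pfister form is hyperbolic. Thus $\psi$ is hyperbolic, i.e.\ $\psi \cong 2^d \langle 1,-1\rangle \cong \varphi \bot (-\varphi)$, since the unique $2^{d+1}$-dimensional hyperbolic form is $2^d$ copies of the hyperbolic plane $\langle 1,-1\rangle$. Combining with $\psi = \varphi \bot \varphi$ yields $\varphi \bot \varphi \cong \varphi \bot (-\varphi)$, and Witt cancellation (valid because ${\rm char}\,k\neq 2$) gives $\varphi \cong -\varphi$.

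Finally, $\varphi$ trivially represents $1$ (take the first coordinate to be $1$ and the remaining coordinates to be $0$), so the isometric form $-\varphi$ also represents $1$, which is equivalent to saying that $\varphi$ represents $-1$. This is precisely the assertion that $-1$ is a sum of $2^d$ squares in $k$, as required. The entire argument hinges on the single nontrivial input that an isotropic Pfister form is hyperbolic; once that is in hand the remainder is formal and presents no genuine obstacle.
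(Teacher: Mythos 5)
Your argument is correct, but it takes a genuinely different route from the paper. The paper's proof is essentially a one-line citation: it introduces the level $s(k)$ and invokes Pfister's level theorem (\cite[page 303, Theorem 2.2]{La}, i.e.\ the statement that the level of a field is a power of two or infinite) to conclude $s(k)\leq 2^d$. You instead rederive the statement from the single input that an isotropic Pfister form is hyperbolic: the relation $-1=\sum_{i\le s}a_i^2$ with $s+1\leq 2^{d+1}$ makes the $(d+1)$-fold Pfister form $2^{d+1}\langle 1\rangle$ isotropic, hence hyperbolic, hence isometric to $2^d\langle 1\rangle\bot 2^d\langle -1\rangle$; Witt cancellation against $2^d\langle 1\rangle\bot 2^d\langle 1\rangle$ gives $2^d\langle 1\rangle\simeq 2^d\langle -1\rangle$, so $2^d\langle 1\rangle$ represents $-1$, which is exactly $-1=2^d\,\square^2$. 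All the steps check out, including the padding with zeros and the final translation between ``$-\varphi$ represents $1$'' and ``$\varphi$ represents $-1$''. What your route buys is self-containedness: it rests only on \cite[page 279, Corollary 1.6]{La} (a fact the paper already quotes in Step 3 of the proof of Theorem \ref{t4.9}) plus Witt cancellation, and it in fact reproves the level theorem itself, since applying your conclusion with $s=s(k)$ forces $s(k)=2^d$. What the paper's route buys is brevity, at the cost of treating Pfister's level theorem as a black box.
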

\begin{proof}
Let $s(k)$ be the level of the field $k$, i.e.
$s(k):={\rm min}\{n\in\bN:-1=n\,\square^2\ \textrm{in}\ k\}$
and $s(k)=\infty$ if $n\langle 1\rangle$ is anisotropic for all $n\in\bN$.
Since $-1=s\,\square^2$ in $k$ by the assumption,
by Pfister's theorem \cite[page 303, Theorem 2.2]{La},
we find that $s(k)\leq 2^d$.
\end{proof}

\begin{lemma}\label{l4.4}
Let $k=\bQ(\sqrt{d})$ be the quadratic field
such that $d\equiv 1\pmod{8}$ where $d$ is a square-free integer
and $d\neq 1$.
Then $7\neq 3\,\square^2$ in $k$.
\end{lemma}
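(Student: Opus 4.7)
The plan is to reduce Lemma \ref{l4.4} to a $2$-adic computation: the hypothesis $d\equiv 1\pmod 8$ will let me embed $k=\bQ(\sqrt{d})$ into $\bQ_2$, and over $\bQ_2$ the quadratic form $3\langle 1\rangle\bot\langle -7\rangle$ simplifies to $4\langle 1\rangle$, which I will then show to be anisotropic by an elementary mod-$8$ enumeration.

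First, I would apply Hensel's lemma to $f(x)=x^2-d$ at the approximation $x_0=1$: since $d\equiv 1\pmod 8$ yields $|1-d|_2\leq 2^{-3}<2^{-2}=|f'(1)|_2^2$, there is $\delta\in\bZ_2$ with $\delta^2=d$, producing a field embedding $\iota\colon k\hookrightarrow\bQ_2$. The same argument applied to $x^2+7$ (using $-7\equiv 1\pmod 8$) gives $s\in\bZ_2$ with $s^2=-7$, so that $\langle -7\rangle\simeq\langle 1\rangle$ over $\bQ_2$ via the change of variable $y=sx$. Consequently $3\langle 1\rangle\bot\langle -7\rangle\simeq 4\langle 1\rangle$ over $\bQ_2$.

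Since anisotropy of a form is preserved under base-field embedding, it then suffices to show that $4\langle 1\rangle$ is anisotropic over $\bQ_2$. I would argue by contradiction: if $\sum_{i=1}^{4}a_i^2=0$ with $(a_1,\ldots,a_4)\in\bQ_2^4$ nonzero, then after scaling by a suitable power of $2$ I may assume $a_i\in\bZ_2$ with $\min_i v_2(a_i)=0$, so at least one $a_i$ is a $2$-adic unit. The squares in $\bZ_2$ have residues in $\{0,1,4\}\pmod 8$, with unit squares being exactly those $\equiv 1$; so enumerating the possible sums of four such residues subject to the constraint that some summand is $1$ produces only values in $\{1,2,3,4,5,6,7\}\pmod 8$, never $0$, the desired contradiction.

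I do not foresee any substantial obstacle: the entire proof reduces to two applications of Hensel's lemma plus one small finite enumeration modulo $8$. The only delicate point is checking the strict inequality in the Hensel hypothesis, which is guaranteed precisely by the congruence $d\equiv 1\pmod 8$ (and would indeed fail for $d\equiv 5\pmod 8$, where $\sqrt{d}\notin\bQ_2$).
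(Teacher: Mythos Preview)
Your proof is correct. Both you and the paper start from the same observation---that $d\equiv 1\pmod 8$ forces $k=\bQ(\sqrt d)$ to embed into $\bQ_2$ (you via Hensel's lemma, the paper via the splitting of $2$ in $\mathfrak{O}_k$ and the resulting isomorphism of completions)---so that anisotropy of $3\langle 1\rangle\bot\langle -7\rangle$ over $\bQ_2$ would suffice. The paths then diverge: you note $-7\in\bQ_2^{\times 2}$, reduce to $4\langle 1\rangle$, and kill it with an elementary $\pmod 8$ enumeration; the paper instead argues by contradiction, assuming isotropy over $\bQ_2$, combining this with the automatic isotropy of $3\langle 1\rangle$ over $\bQ_p$ for odd $p$ and over $\bR$, invoking Hasse--Minkowski to deduce isotropy over $\bQ$, and then colliding with Gauss's three-squares theorem (Lemma~\ref{l4.1}). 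Your route is shorter and fully self-contained, avoiding both Hasse--Minkowski and the classical arithmetic input; the paper's route, while heavier, makes the connection to Lemma~\ref{l4.1} explicit and keeps the $2$-adic work to a minimum.
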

\begin{proof}
By \cite[page 313, Theorem 32]{ZS},
the prime $2$ splits completely in $k$,
i.e. $2\mathfrak{O}_k=\mathfrak{p}_1\mathfrak{p}_2$
where $\mathfrak{O}_k$ is the ring of integers of $k$
and $\mathfrak{p}_1$ and $\mathfrak{p}_2$ are distinct prime idelas of
$\mathfrak{O}_k$.
Thus $e_i=f_i=1$ for $i=1,2$
where $e_i$, $f_i$ denote the ramification index and the residue class
degree of $\mathfrak{p}_i$ (see \cite[page 285, Theorem 21]{ZS}).
Let $k_i$ be the local field associated to $\mathfrak{p}_i$,
i.e. $k_i$ is the quotient field of the completion
associated to the DVR determined by $\mathfrak{p}_i$.
Since $e_i=f_i=1$, it follows that the embedding
$\bQ_2\rightarrow k_i$ is surjective (i.e. an isomorphism)
by \cite[page 36, Section 5]{Se2}.

We will show that $7\neq 3\,\square^2$ in $k$.

Suppose not.
Then $3\langle 1\rangle\bot\langle -7\rangle$ is isotropic over $k$.
Hence $3\langle 1\rangle\bot\langle -7\rangle$ is also isotropic over $k_i$
for $i=1,2$.
Since $k_i\simeq\bQ_2$, it follows that
$3\langle 1\rangle\bot\langle -7\rangle$ is isotropic over $\bQ_2$.

On the other hand, $3\langle 1\rangle$ is isotropic over $\bQ_p$
for any odd prime number $p$ by \cite[page 150, Corollary 2.5]{La}.

Apply the Hasse-Minkowski theorem \cite[page 168, Theorem 3.1]{La}
to the indefinite form $3\langle 1\rangle\bot\langle -7\rangle$
over $\bQ$.
We find that $3\langle 1\rangle\bot\langle -7\rangle$
is isotropic over $\bQ$.
But this is impossible by Lemma \ref{l4.1}.
\end{proof}

\begin{remark}
If $\alpha=7+8m(m+1)\in\bN$ where $m$ is a non-negative integer,
the above proof remains valid for $\alpha$, i.e. $\alpha\neq 3\,\square^2$
in $k$, but $\alpha=1+((2m+1)\sqrt{2})^2+2^2$ in $k(\sqrt{2})$,
because $7+8d\neq 3\,\square^2$ in $\bQ$ for any non-negative integer $d$ by Lemma \ref{l4.1}.
\end{remark}

Once Lemma \ref{l4.4} is available, the proof of Theorem \ref{t1.5}
is rather obvious to those who are versed in Serre's ideas of
the proofs for \cite[page 85, Proposition 32.25]{GMS},
\cite[page 89, Theorem 33.26]{GMS},
\cite[pages 90--92, Section 34]{GMS}.

For the convenience of the reader, we choose to rewrite some key points
of these proofs at a leisurely pace so that the reader may accept
the proof comfortably.

As before, $k$ is a field of ${\rm char}\,k\neq 2$,
and $K$ is a field extension of $k$.
In Section \ref{s2}, the set $H^1(K,G)$ denotes the set of all the
(continuous) group homomorphisms from $\Gamma_K$ to the finite group $G$;
the set $H^1(K,G)$ is bijective to the set of $G$-Galois algebras
over $K$.
When $G=S_n$, the symmetric group of degree $n$,
there exists another bijection of $H^1(K,S_n)$ to
${\rm Et}_n(K)$, the set of all the \'etale algebras $E$ over $K$ with $[E:K]=n$
by the Galois descent (see \cite[page 153, Proposition 4]{Se2},
\cite[page 9, Example 3.2]{GMS}).
For each morphism $\varphi:\Gamma_K\to S_n$,
there corresponds a unique $n$-\'etale algebra $E_\varphi$ over $K$;
if $\varphi$ is understood from the context we may write $E$ for $E_\varphi$.

If $E\in {\rm Et}_n(K)$, we may define the trace form of $E$ over $K$,
denoted by $q_E$, as $q_E(x)={\rm Tr}_{E/K}(x^2)$
(see \cite[page 59, 25.7]{GMS}, \cite{Se3}).
On the other hand, if $q=\langle a_1,\ldots,a_n\rangle$ is a quadratic form
over $K$ (where $a_1,\ldots,a_n\in K^\times)$, we define the
Stiefel-Whitney classes $w_1(q),\ldots$ as follows (see \cite[page 41]{GMS})
\begin{align*}
w_1(q)&=\delta(a_1\cdots a_n)\in H^1(K,\bZ/2\bZ)\simeq K^\times/K^{\times 2},\\
w_2(q)&=\sum_{1\leq i<j\leq n}\delta(a_i)\cup\delta(a_j)\in H^2(K,\bZ/2\bZ)\\
\cdots.
\end{align*}

For simplicity, we will write $\delta(a_i)\cup\delta(a_j)$ as $(a_i)\cup (a_j)\in H^2(K,\bZ/2\bZ)$; similarly, we write $w_1(q)$ as $(a_1\cdots a_n)\in H^1(K,\bZ/2\bZ)$.

If $E\in {\rm Et}_n(K)$, note that $w_1(q_E)$ and $w_2(q_E)$ are well-defined.

\begin{lemma}[{\cite[page 662, 3.1]{Se3}, \cite[page 99, Corollary 9.2.3]{Se4}, \cite[page 88, 33.18]{GMS}}]\label{l4.5}
For $n\geq 4$, let $\varphi:\Gamma_K\to S_n$ be a continuous morphism and
$E\in {\rm Et}_n(K)$ be the \'etale algebra corresponding to $\varphi$.
Let $\widetilde{A_n}\xrightarrow{\varepsilon} A_n$ be the non-split double covering
of the alternating group $A_n$; write $1\to \{\pm 1\}\to \widetilde{A_n}\xrightarrow{\varepsilon} A_n \to 1$ for the central extension associated to $\widetilde{A_n}\xrightarrow{\varepsilon} A_n$. Let $\iota:A_n\to S_n$ be the inclusion map. Then\\
{\rm (1)} $\varphi(\Gamma_K)\leq A_n$ if and only if $w_1(q_E)=0$.\\
{\rm (2)} In case $\varphi(\Gamma_K)\leq A_n$, there exists a morphism $\varphi_0:\Gamma_K\to\widetilde{A_n}$ such that $\varphi=\iota \circ \varepsilon\circ\varphi_0$
if and only if $w_2(q_E)=0$.
\end{lemma}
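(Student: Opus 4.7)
The plan is to handle the two parts of Lemma \ref{l4.5} separately, reducing (1) to a discriminant calculation and (2) to the classical Spin-lifting obstruction formula for quadratic forms.

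For part (1), start from the sign sequence $1 \to A_n \to S_n \xrightarrow{\operatorname{sgn}} \{\pm 1\} \to 1$. Composing $\varphi$ with $\operatorname{sgn}$ yields a character $\chi \in H^1(K, \bZ/2\bZ) \simeq K^\times/K^{\times 2}$, and by construction $\varphi(\Gamma_K) \le A_n$ if and only if $\chi = 0$. Next, identify $\chi$ with the discriminant class $\operatorname{disc}(E) \in K^\times/K^{\times 2}$ via the standard ``twist-by-$\operatorname{sgn}$'' description of the discriminant under the Galois-descent bijection $H^1(K, S_n) \simeq \operatorname{Et}_n(K)$. Finally, for any diagonalization $q_E \simeq \langle a_1, \ldots, a_n \rangle$ one reads off $w_1(q_E) = (a_1 \cdots a_n) \in K^\times/K^{\times 2}$, which coincides with $\operatorname{disc}(E)$ in characteristic $\ne 2$ (the classical comparison between the discriminants of an \'etale algebra and of its trace form). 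Chaining these identifications yields (1).

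For part (2), consider the embedding $\rho : S_n \hookrightarrow O_n$ by permutation matrices, which restricts to $A_n \hookrightarrow SO_n$ on the kernel of $\operatorname{sgn}$. Let $\operatorname{Spin}_n \to SO_n$ be the non-trivial central double cover. Pulling back along $\rho|_{A_n}$ yields a central extension $1 \to \{\pm 1\} \to P \to A_n \to 1$. The first key step is to identify $P$ with $\widetilde{A_n}$ as a central extension. This follows from two ingredients: (a) $H^2(A_n, \bZ/2\bZ) \simeq \bZ/2\bZ$ for every $n \ge 4$, so up to isomorphism there is only one non-split $\bZ/2\bZ$-extension; and (b) $P$ is non-split, because the product of two disjoint transpositions lifts to an element of order $4$ in $\operatorname{Spin}_n$, a routine Clifford-algebra computation. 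Granting this identification, the extension class $\varepsilon \in H^2(A_n, \bZ/2\bZ)$ of $\widetilde{A_n} \to A_n$ is the pullback along $\rho|_{A_n}$ of the Spin class $s_2 \in H^2(SO_n, \bZ/2\bZ)$. For any $\varphi$ with image in $A_n$, the composition $\rho \circ \varphi : \Gamma_K \to SO_n$ corresponds under Galois descent to the trace form $q_E$ (as a twist of the split form $\langle 1, \ldots, 1 \rangle$). By Serre's Spin-lifting formula, the obstruction to lifting a class in $H^1(K, SO_n)$ to $H^1(K, \operatorname{Spin}_n)$ is exactly $w_2$ of the associated quadratic form. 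Applied here, this gives $\varphi^*(\varepsilon) = w_2(q_E)$ in $H^2(K, \bZ/2\bZ)$, so $\varphi$ lifts to $\widetilde{A_n}$ if and only if $w_2(q_E) = 0$.

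The main obstacle is the group-theoretic identification $P \simeq \widetilde{A_n}$: one must both know that $H^2(A_n, \bZ/2\bZ) \simeq \bZ/2\bZ$ (requiring a little extra care at the exceptional values $n = 6, 7$ where the Schur multiplier has an additional $3$-part that does not contribute mod $2$) and verify the non-splitness of $P$ via an explicit Clifford-algebra computation on a double transposition. Once this is secured, Serre's Spin-lifting formula carries the remainder of the proof of (2) through formally.
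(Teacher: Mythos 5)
The paper offers no proof of this lemma at all---it is quoted directly from Serre ([Se3, 3.1], [Se4, Corollary 9.2.3], [GMS, 33.18])---and your argument is in substance exactly the proof in those cited sources: part (1) via the identification of $w_1(q_E)$ with the discriminant of $E$ (the pullback of $\operatorname{sgn}$ under Galois descent), and part (2) by pulling back $\operatorname{Spin}_n \to SO_n$ along the permutation embedding of $A_n$, identifying the pullback with $\widetilde{A_n}$ (using $H^2(A_n,\bZ/2\bZ)\simeq\bZ/2\bZ$ for $n\ge 4$ together with the order-$4$ Clifford lift of a product of two disjoint transpositions), and then applying the Springer--Fr\"ohlich lifting-obstruction formula. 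The one point you should make explicit is that the general obstruction formula over $O_n$ carries a spinor-norm correction term $(2)\cup w_1(q_E)$, coming from $q_0(e_i-e_j)=2$ (this is the $(2)\cup e_1$ in Serre's Th\'eor\`eme 1 of [Se3]); your unqualified claim that ``the obstruction is exactly $w_2$'' is legitimate precisely because part (2) assumes $\varphi(\Gamma_K)\le A_n$, so $w_1(q_E)=0$, the twist stays inside $SO_n$ of the unit form (whose own $w_2$ and discriminant class vanish), and the correction disappears.
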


\begin{lemma}[{\cite[page 216, Corollary 8.20.10]{Be}}]\label{l4.6}
Let $K$ be a field with ${\rm char}\,K\neq 2$,
${\rm Br}(K)$ be the  Brauer group of $K$,
${}_2{\rm Br}(K):=\{[A]\in {\rm Br}(K):2[A]=0\}$
and
$\left(\frac{a,b}{K}\right)$ be the quoternion algebra over $K$
where $a,b\in K^\times$ $($see \cite{Dr}$)$.
There exists a natural isomorphism $\Phi:{}_2{\rm Br}(K)\to H^2(K,\bZ/2\bZ)$
which sends the similarity class of $\left(\frac{a,b}{K}\right)$ to
the cup product $(a)\cup (b)$.
\end{lemma}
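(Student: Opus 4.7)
The plan is to derive the isomorphism from Kummer theory together with Hilbert's Theorem 90, and then to identify the cup product $(a)\cup(b)$ with the class of the quaternion algebra through the cyclic crossed-product construction.

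First I would apply the Galois cohomology functor $H^\bullet(\Gamma_K,-)$ to the Kummer short exact sequence of $\Gamma_K$-modules
\begin{equation*}
1 \longrightarrow \mu_2 \longrightarrow K_{\rm sep}^\times \xrightarrow{x\mapsto x^2} K_{\rm sep}^\times \longrightarrow 1,
\end{equation*}
which is exact because ${\rm char}\,K\neq 2$. Hilbert's Theorem 90 gives $H^1(\Gamma_K,K_{\rm sep}^\times)=0$, and the resulting long exact sequence produces an injective connecting map $\partial: H^2(K,\mu_2) \hookrightarrow H^2(K,K_{\rm sep}^\times)={\rm Br}(K)$ whose image is exactly the kernel of multiplication by $2$, i.e. ${}_2{\rm Br}(K)$. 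The canonical $\Gamma_K$-equivariant identification $\mu_2\simeq\bZ/2\bZ$ (valid since ${\rm char}\,K\neq 2$) then produces the asserted isomorphism $\Phi:=\partial^{-1}:{}_2{\rm Br}(K)\xrightarrow{\sim}H^2(K,\bZ/2\bZ)$; naturality in $K$ is automatic from the functoriality of the long exact sequence.

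Second, I would show that $\Phi$ sends the class of $\left(\frac{a,b}{K}\right)$ to $(a)\cup(b)$. Via the Kummer identification $K^\times/K^{\times 2}\simeq H^1(K,\mu_2)\simeq H^1(K,\bZ/2\bZ)$, the coset of $a$ corresponds to the class $(a)$. On the algebra side, $\left(\frac{a,b}{K}\right)$ is the cyclic crossed-product algebra $[K(\sqrt{a})/K,\sigma,b]$ with $\sigma$ the non-trivial element of ${\rm Gal}(K(\sqrt{a})/K)$, and this description represents its Brauer class by an explicit $2$-cocycle $c\in Z^2(\Gamma_K, K_{\rm sep}^\times)$ built from the factor system taking values in $\{1,b\}$ (see \cite[page 49, page 71]{Dr}). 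Choosing a $1$-cochain $f:\Gamma_K\to K_{\rm sep}^\times$ with $f^2=a$, its coboundary is a $1$-cocycle with values in $\mu_2$ representing $(a)$; a direct cochain-level calculation, using the multiplication pairing $\mu_2\otimes\mu_2\to\mu_2$ to realize the cup product, exhibits a representative of $\partial\bigl((a)\cup(b)\bigr)$ that agrees with $c$ up to coboundary in $K_{\rm sep}^\times$. This forces $\Phi\bigl(\left[\left(\frac{a,b}{K}\right)\right]\bigr)=(a)\cup(b)$.

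The main obstacle is the bookkeeping in this final cocycle identification: one must align the sign and normalization conventions of the cup product pairing $\mu_2\otimes\mu_2\to\mu_2$, the Kummer connecting homomorphism, and the defining $2$-cocycle of a cyclic algebra, so that the correspondence comes out as $\left(\frac{a,b}{K}\right)\leftrightarrow (a)\cup(b)$ rather than its inverse. Fortunately, since the whole discussion takes place in ${}_2{\rm Br}(K)$ where every element has order at most $2$, any residual sign ambiguity is harmless, and the verification then reduces to a short direct calculation. The statement itself is classical and is a degree-two special case of Merkurjev's theorem, so it also can simply be quoted from the reference in the statement.
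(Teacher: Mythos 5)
Your proposal is correct. Note, however, that the paper supplies no proof of this lemma at all: it is quoted verbatim as a classical result from Berhuy's book (Corollary 8.20.10 there), so there is nothing to compare against except the reference itself. What you have written out is precisely the standard textbook argument behind that citation: the Kummer sequence $1\to\mu_2\to K_{\rm sep}^\times\to K_{\rm sep}^\times\to 1$ plus Hilbert 90 identifies $H^2(K,\mu_2)\simeq H^2(K,\bZ/2\bZ)$ with ${}_2{\rm Br}(K)$, and the crossed-product/cocycle computation identifies the class of $\left(\frac{a,b}{K}\right)=[K(\sqrt{a})/K,\sigma,b]$ with $(a)\cup(b)$. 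One small terminological quibble: the injection $H^2(K,\mu_2)\hookrightarrow{\rm Br}(K)$ is not a connecting homomorphism but the map induced by the inclusion $\mu_2\hookrightarrow K_{\rm sep}^\times$; its injectivity follows from the vanishing of the preceding term $H^1(K,K_{\rm sep}^\times)$ in the long exact sequence, and exactness at the next term gives that its image is the kernel of multiplication by $2$. Your observation that any sign or normalization ambiguity is killed by $2$-torsion is exactly the right way to dispose of the bookkeeping, and your closing remark that the lemma can simply be quoted is in fact what the authors do.
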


\begin{lemma}[{\cite [page 155, Theorem 5.4]{Sc}}]\label{l4.7}
Let $k$ be a field with ${\rm char}\,k\neq 2$.
Let $\psi$ and $\varphi$ be two quadratic forms over $k$
satisfying that
{\rm (i)} $\psi$ is anisotropic over $k$ with ${\rm dim}\,\psi=2m$
for some $m$ and {\rm (ii)} $\varphi=\langle a_1,\ldots,a_n\rangle$
where $a_i\in k^\times$ and $a_1=1$.
Define $\widehat{K}$ to be the quotient field of the integral domain
$k[X_1,\ldots,X_n]/\langle a_1X_1^2+\cdots+a_nX_n^2\rangle$ where
$k[X_1,\ldots,X_n]$ is the polynomial ring of $n$ variables over $k$.
If $\psi\otimes_k \widehat{K}\simeq m\langle 1\rangle\bot m\langle -1\rangle$
over $\widehat{K}$, then $2m\geq n$ and
$\psi\simeq \varphi\bot\varphi^\prime$ over $k$
where $\varphi^\prime$ is some quadratic form over $k$.
\end{lemma}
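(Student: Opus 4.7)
The statement is the classical Cassels--Pfister subform theorem (in the version given in Scharlau's book); the plan is to follow the standard proof strategy, which works via the Cassels--Pfister representation theorem for polynomials.

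The first step is to read off the geometric content of the hypothesis. The ring $R := k[X_1,\ldots,X_n]/\langle a_1X_1^2+\cdots+a_nX_n^2\rangle$ being a domain forces $\varphi = \langle 1,a_2,\ldots,a_n\rangle$ to be anisotropic (otherwise the defining polynomial would factor), and $\widehat{K} = \mathrm{Frac}(R)$ is the function field of the affine quadric $Q\colon \varphi(X)=0$. The tuple $(X_1,\ldots,X_n)\in\widehat{K}^n$ gives a nontrivial zero of $\varphi$, so $\varphi\otimes_k\widehat{K}$ is isotropic. Since $\psi\otimes_k\widehat{K}\simeq m\langle 1\rangle\bot m\langle -1\rangle$ is hyperbolic (in particular universal), we obtain a nontrivial isotropic vector $v\in\widehat{K}^{2m}$ for $\psi$, and hence, after clearing denominators, a vector $v\in k[X_1,\ldots,X_n]^{2m}$ satisfying $\psi(v)\equiv 0\pmod{\varphi(X)}$, with not all components lying in the ideal $(\varphi(X))$.

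The second step is to upgrade this isotropy to a subform statement over $\widehat{K}$ and to use the anisotropy of $\psi$ over $k$ to prevent cancellation. Because $\psi\otimes_k\widehat{K}$ is hyperbolic and $\varphi\otimes_k\widehat{K}$ is isotropic, a direct Witt decomposition shows $\varphi\otimes_k\widehat{K}$ embeds as a subform of $\psi\otimes_k\widehat{K}$; in particular $2m\geq n$, which is the dimension inequality in the conclusion. At this stage we have $\varphi\leq\psi$ over $\widehat{K}$, and we must descend this to $k$.

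The third and crucial step is the descent. I would apply the Cassels--Pfister representation theorem (Scharlau, Ch.~4, Theorem 3.7 or the cited Theorem 5.4), which states that an anisotropic form $\psi$ over $k$ that represents a polynomial $p\in k[X_1,\ldots,X_n]$ over the rational function field $k(X_1,\ldots,X_n)$ already represents $p$ over the polynomial ring. Applying this iteratively to the polynomial identities that witness $\varphi\leq\psi$ over $\widehat{K}$, and then specializing the indeterminates at $(1,0,\ldots,0),(0,1,0,\ldots,0),\ldots,(0,\ldots,0,1)$, one extracts, coordinate by coordinate, an orthogonal basis of a copy of $\varphi=\langle 1,a_2,\ldots,a_n\rangle$ inside $\psi$. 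The normalization $a_1=1$ is exactly what guarantees that the descended subform is $\varphi$ itself and not merely a form similar to $\varphi$. Once $\varphi\leq\psi$ over $k$ is established, writing $\psi\simeq\varphi\bot\varphi'$ for the orthogonal complement finishes the argument.

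I expect the main obstacle to be the third step: the detailed bookkeeping needed to chain Cassels--Pfister representations so as to peel off the coordinates of $\varphi$ one at a time, while using the anisotropy of $\psi$ over $k$ to ensure that the successive orthogonal complements remain well-defined (and that no residual terms collapse into hyperbolic planes that would undermine the descent). Induction on $n$ combined with specialization at the distinguished point $a_1=1$ is the natural organizing principle.
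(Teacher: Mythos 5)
The paper offers no proof of this lemma: it is quoted directly from Scharlau's book (it is the Cassels--Pfister subform theorem), so there is nothing in the paper to compare your argument against. Your overall plan --- reduce to the Cassels--Pfister representation theorem --- is indeed the standard route, but the sketch has gaps at exactly the points where the work is. In Step 1, ``$R$ is a domain'' does not force $\varphi$ to be anisotropic once $n\ge 3$ (e.g.\ $X_1^2+X_2^2-X_3^2$ is irreducible over $\bQ$ but isotropic); anisotropy of $\varphi$ instead follows because an isotropic quadric has purely transcendental function field, over which $\psi$ would remain anisotropic. In Step 2, the claim that hyperbolicity of $\psi\otimes\widehat K$ plus isotropy of $\varphi\otimes\widehat K$ yields $\varphi\otimes\widehat K\subset\psi\otimes\widehat K$ ``by a direct Witt decomposition'' is not valid: a form $q$ embeds into $m\langle 1\rangle\bot m\langle -1\rangle$ if and only if $\dim q+\dim q_{\rm an}\le 2m$, and mere isotropy only gives $\dim(\varphi\otimes\widehat K)_{\rm an}\le n-2$, which would require $n\le m+1$. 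So $2m\ge n$ is not obtained at this stage; it is a consequence of the subform conclusion, not an input.

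The decisive gap is in Step 3. The Cassels--Pfister representation theorem is a statement over the purely transcendental field $k(X_1,\ldots,X_n)$, whereas the identities you propose to feed into it live over $\widehat K$, the function field of the quadric $\varphi=0$, which is not purely transcendental --- that is the whole difficulty. The missing bridge, which is the heart of Scharlau's proof, is: from $\psi\otimes\widehat K$ hyperbolic one first shows that the generic value $\varphi(X)=\sum_i a_iX_i^2$ is a similarity factor of $\psi$ over $k(X_1,\ldots,X_n)$, hence that $d\cdot\varphi(X)$ is represented by $\psi$ over $k(X_1,\ldots,X_n)$ for each $d\in D(\psi)$; only then does Cassels--Pfister produce a polynomial identity $\psi(f_1,\ldots,f_{2m})=d\,\varphi(X)$, whose linear parts (this is where anisotropy of $\psi$ over $k$ enters) give $n$ vectors spanning a copy of $d\varphi$ inside $\psi$, and with it the bound $2m\ge n$. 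Note finally that this argument yields $d\varphi\subset\psi$ for $d\in D(\psi)$; the normalization $a_1=1$ alone does not give $\varphi\subset\psi$ --- one also needs $1\in D(\psi)$ (true in the paper's application, where $\psi=8\langle 1\rangle$), as the example $k=\bQ$, $\psi=\langle 3,3\rangle$, $\varphi=\langle 1,1\rangle$, $\widehat K=\bQ(\sqrt{-1}\,)(t)$ shows.
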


\begin{definition}\label{d4.8}
We will define an invariant $e\in {\rm Inv}^3_{{\rm nr},k}(\widetilde{A_n},\bZ/2\bZ)_{\rm norm}$ for $n=6,7$.
For any field extension $k^\prime$ of $k$,
for any continuous morphism $\varphi:\Gamma_{k^\prime}\to\widetilde{A_6}$,
let $\widehat{\varphi}$ be the composite of $\Gamma_{k^\prime}\xrightarrow{\varphi}\widetilde{A_6}\xrightarrow{\varepsilon}A_6\xrightarrow{\iota} S_6$
where $\varepsilon:\widetilde{A_6}\to A_6$ and $\iota:A_6\to S_6$ are the same as
in Lemma \ref{l4.5}.

Let $E/k^\prime$ be the \'etale algebra corresponding to
$\widehat{\varphi}:\Gamma_{k^\prime}\to S_6$ and $q_E$ be the trace form
of $E$ over $k^\prime$ (note that $[E:k^\prime]=6)$.
By Lemma \ref{l4.5}, $w_1(q_E)=w_2(q_E)=0$.
Apply \cite[page 83, Theorem 32.10]{GMS}.
We find that $q_E\simeq 2\langle 1\rangle\bot 4\langle c\rangle$
for some $c\in k^\times$.
Moreover, $(2)\cup (c)=0$ and $c=4\,\square^2$ in $k^\prime(\sqrt{2})$.
Now we define
\begin{align*}
e_{k^\prime}:H^1(k^\prime,\widetilde{A_6})&\longrightarrow H^3(k^\prime,\bZ/2\bZ)\\
\varphi\ &\longmapsto (c)\cup (-1)\cup (-1).
\end{align*}

If $\varphi:\Gamma_{k^\prime}\to\widetilde{A_6}$ is the trivial map,
i.e. $\varphi(\sigma)=1$ for all $\sigma\in\Gamma_{k^\prime}$,
then $\widehat{\varphi}:\Gamma_{k^\prime}\to S_6$ is also the trivial map
and the corresponding \'etale algebra is the split algebra $E^{\rm split}$
(see \cite[page 9, Example 3.2]{GMS}); thus the trace form is
$6\langle 1\rangle$.
Hence $e_{k^\prime}(\varphi)=0$ in $H^3(k^\prime,\bZ/2\bZ)$.

On the other hand, since $(2)\cup (c)=0$ and $c=4\,\square^2$ in
$k^\prime(\sqrt{2})$, we may apply \cite[page 84, Lemma 32.18]{GMS}.
It follows that $e_{k^\prime}(\varphi)\in H^3(k^\prime,\bZ/2\bZ)$
is a unramified cohomological class.

Conclusion.
$e$ is normalized and unramified.
For details, see \cite[page 89]{GMS}.

\medskip
Now we turn to $\widetilde{A_7}$.

Let $\varphi,\tilde{\varphi}, E$ be the same as before
(but $n=6$ is replaced by $n=7$).
By \cite[page 79, Corollary 31.24]{GMS},
$q_E\simeq\langle 1\rangle\bot q^\prime$ where
$q^\prime$ is a trace form of rank $6$.
Since $w_1(q_E)\simeq w_2(q_E)=0$,
it follows that $w_1(q^\prime)=w_2(q^\prime)=0$.
Then we find that $q^\prime=2\langle 1\rangle\bot 4\langle c\rangle$
as before.
We define
\begin{align*}
e_{k^\prime}:H^1(k^\prime,\widetilde{A_7})&\longrightarrow H^3(k^\prime,\bZ/2\bZ)\\
\varphi\ &\longmapsto (c)\cup (-1)\cup (-1).
\end{align*}
Clearly, $e\in {\rm Inv}^3_{{\rm nr},k}(\widetilde{A_7},\bZ/2\bZ)_{\rm norm}$ because we use the same cohoomlogical class in $\widetilde{A_7}$ and also in $\widetilde{A_6}$.
\end{definition}

\begin{theorem}\label{t4.9}
Let $e$ be the invariant defined in {\rm Definition \ref{d4.8}}.
If $k$ is a field containing $\bQ$ such that the quadratic forms
$3\langle 1\rangle\bot\langle -7\rangle$ and
$8\langle 1\rangle$ are anisotropic over $k$,
then $e$ is a non-zero invariant.
\end{theorem}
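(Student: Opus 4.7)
The plan is to produce an explicit field extension $K\supset k$ and a continuous morphism $\varphi:\Gamma_K\to\widetilde{A_6}$ with $e_K(\varphi)\neq 0$ in $H^3(K,\bZ/2\bZ)$; this suffices to show that the invariant $e$ is non-zero. The argument follows the outline of Serre's proof over $\bQ$ in \cite[pages 90--92, Section 34]{GMS}, with the anisotropy hypotheses of Theorem \ref{t4.9} replacing the specific number-theoretic facts about $\bQ$ used there.

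First I would realize a quadratic form of shape $q=2\langle 1\rangle\bot 4\langle 7\rangle$ of rank $6$ as the trace form $q_E$ of an \'etale algebra $E$ coming from a lift $\varphi:\Gamma_K\to\widetilde{A_6}$. The Stiefel-Whitney vanishing $w_1(q)=w_2(q)=0$ required by Lemma \ref{l4.5} can be checked directly, and the conditions $(2)\cup(7)=0$ and $7=4\,\square^2$ in $K(\sqrt 2)$ from Definition \ref{d4.8} follow from the identities $2=3^2-7\cdot 1^2$ and $7=1^2+(\sqrt 2)^2+2^2$. Invoking \cite[page 83, Theorem 32.10]{GMS} together with Lemma \ref{l4.5} then provides the desired $\varphi$, and by Definition \ref{d4.8} one has $e_K(\varphi)=(7)\cup(-1)\cup(-1)$.

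The main obstacle is arranging this class to be non-zero: by Arason's theorem on $3$-fold Pfister forms, $(7)\cup(-1)\cup(-1)\neq 0$ in $H^3(K,\bZ/2\bZ)$ if and only if the Pfister form $\langle\langle 7,-1,-1\rangle\rangle=4\langle 1\rangle\bot 4\langle -7\rangle$ is anisotropic over $K$. Over $k$ itself the class vanishes, since $7=2^2+1^2+1^2+1^2$ over $\bQ\subset k$ makes the Pfister form isotropic; so $K$ must be taken as a proper extension of $k$, naturally the function field $K=k(Y)$ of a judiciously chosen quadric $Y$ over $k$.

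The decisive step is then to choose $Y$ so that (i) the realization of $\varphi$ above persists (automatic, since the conditions in Definition \ref{d4.8} propagate to extensions), and (ii) $\langle\langle 7,-1,-1\rangle\rangle$ remains anisotropic over $K$. For (ii) I would argue by contradiction via Lemma \ref{l4.7}: if the Pfister form became isotropic, hence hyperbolic, over $K=k(Y)$, then Pfister's subform theorem forces the defining form of $Y$ to be a subform of $\langle\langle 7,-1,-1\rangle\rangle$ over $k$; a judicious choice of $Y$, together with Witt cancellation, should then yield a decomposition incompatible with the two anisotropy hypotheses of Theorem \ref{t4.9}. Concretely, the anisotropy of $3\langle 1\rangle\bot\langle -7\rangle$ prevents $7$ from being a sum of three squares in $k$, controlling the ``$7$-side'' of the Pfister form, while the anisotropy of $8\langle 1\rangle$ ensures that the level of $k$ is at least $8$, controlling the ``$(-1)$-side''. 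I expect this subform/Witt-cancellation analysis, together with the precise choice of $Y$, to be the main technical step.
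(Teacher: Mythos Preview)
Your overall strategy (pass to the function field of a quadric, invoke the subform theorem of Lemma \ref{l4.7}, then use Witt cancellation against the two anisotropy hypotheses) is exactly right, but your choice $c=7$ is fatal and the proposal cannot be repaired without changing it. You correctly observe that $7=2^2+1^2+1^2+1^2$ makes $\langle\langle 7,-1,-1\rangle\rangle$ isotropic, hence hyperbolic, over $k$, so that $(7)\cup(-1)\cup(-1)=0$ in $H^3(k,\bZ/2\bZ)$. But then this class is zero in $H^3(K,\bZ/2\bZ)$ for \emph{every} extension $K/k$: restriction in Galois cohomology is functorial, and a hyperbolic Pfister form stays hyperbolic after base change. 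No choice of quadric $Y$ can resurrect a class that is already zero over the base field. Thus with $c=7$ you always get $e_K(\varphi)=0$, and the argument collapses.

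The paper takes $c=-1$ instead, and the logic is inverted relative to your plan. With $c=-1$, conditions (i) and (ii) of \cite[Theorem 32.10]{GMS} hold over $k$, but condition (iii), namely $-1=4\,\square^2$ in $k(\sqrt{2})$, \emph{fails} over $k$ (indeed $8\langle 1\rangle$ is anisotropic there). One then passes to the function field $K$ of the affine quadric $x_1^2+\cdots+x_5^2+7=0$: over $K(\sqrt{2})$ the identity $7=1^2+(\sqrt{2})^2+2^2$ turns the defining relation into an isotropy of $8\langle 1\rangle$, forcing $-1=4\,\square^2$ there, so the trace-form realization goes through. Now $e_K(\varphi)=(-1)\cup(-1)\cup(-1)$, and the issue is whether $8\langle 1\rangle$ remains anisotropic over $K$. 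If not, Lemma \ref{l4.7} applied to $\psi=8\langle 1\rangle$ and $\varphi=5\langle 1\rangle\bot\langle 7\rangle$ gives $8\langle 1\rangle\simeq 5\langle 1\rangle\bot\langle 7\rangle\bot\varphi'$ over $k$; Witt cancellation yields $3\langle 1\rangle\simeq\langle 7\rangle\bot\varphi'$, so $7=3\,\square^2$ in $k$, contradicting the anisotropy of $3\langle 1\rangle\bot\langle -7\rangle$. In short: the passage to $K$ is used to \emph{create} the torsor (condition (iii)), not to make a dead cohomology class come alive; the two anisotropy hypotheses then ensure that the class $(-1)^{\cup 3}$, which is nonzero over $k$, survives the extension to $K$.
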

\begin{proof}
We will follow the proof of \cite[page 85, Proposition 32.25]{GMS}.
It suffices to show that the invariant $e$ is not zero where $e\in {\rm Inv}^3_{{\rm nr},k}(\widetilde{A_6},\bZ/2\bZ)_{\rm norm}$. The proof for the case of $\widetilde{A_7}$ follows from the case of $\widetilde{A_6}$.

\bigskip
Step 1.
Define $K=k(x_1,x_2,x_3,x_4,x_5)$ with relation
$x_1^2+x_2^2+x_3^2+x_4^2+x_5^2+7=0$.

We will show that
\begin{align*}
e_K:H^1(K,\widetilde{A_6})\longrightarrow H^3(K,\bZ/2\bZ)
\end{align*}
is a non-zero map.

The crux is to consider $q=2\langle 1\rangle\bot 4\langle -1\rangle$, a quadratic form
over $K$ and to show that $q$ is the trace form of some \'etale algebra
$E/K$ corresponding to the morphism
$\Gamma_K\xrightarrow{\varphi}\widetilde{A_6}\xrightarrow{\varepsilon}A_6\xrightarrow{\iota} S_6$ where $\varphi\in H^1(K,\widetilde{A_6})$ is some morphism, and then to show that $e_K(\varphi)=(-1)\cup (-1)\cup (-1)$ is a non-zero cohomology class.

\bigskip
Step 2.
Since $2\cdot H^i(K,\bZ/2\bZ)=0$ ($i=1$ or $2$), it is clear that $w_1(q)=w_2(q)=0$.
Thus we may use \cite[page 83, Theorem 32.10]{GMS} to ensure that
$q=2\langle 1\rangle\bot 4\langle -1\rangle$ is a trace form over $K$.
That is, for $c=-1$, we should check whether the three conditions
(i) $(2)\cup (c)=0$, (ii) $q\simeq 2\langle 1\rangle\bot 4\langle c\rangle$,
(iii) $c=4\,\square^2$ in $K(\sqrt{2})$
in \cite[page 83, Theorem 32.10]{GMS} are valid.

For (ii), $q=2\langle 1\rangle\bot 4\langle c\rangle$ with $c=-1$.

For (i), $(2)\cup (c)=(2)\cup (-1)$.
But $(2)\cup (-1)$ is isomorphic to the quoternion algebra
$\left(\frac{2,-1}{K}\right)$ by Lemma \ref{l4.6}.
Now $2={\rm Norm}_{K(\sqrt{-1})/K}(1+\sqrt{-1})$.
Hence $\left(\frac{2,-1}{K}\right)$ is isomorphic to the matrix ring over $K$, i.e. it belongs to
the trivial similarity class in ${\rm Br}(K)$.
Thus $(2)\cup (-1)=0$.

For (iii), we first prove that
$-1=4\,\square^2$ in $K(\sqrt{2})$ if and only if
$8\langle1\rangle$ is isotropic over $K(\sqrt{2})$.

The direction ``$\Rightarrow$'' is trivial.
Consider the direction ``$\Leftarrow$''.

If $8\langle 1\rangle$ is isotropic over $K(\sqrt{2})$,
then $-1=7\,\square^2$ in $K(\sqrt{2})$.
By Lemma \ref{l4.3}, $-1=4\,\square^2$ in $K(\sqrt{2})$.

Hence to verify the condition (iii),
it is equivalent to show that
$8\langle 1\rangle$ is isotropic over $K(\sqrt{2})$.

From $7=1+(\sqrt{2})^2+2^2$,
we find that $x_1^2+x_2^2+x_3^2+x_4^2+x_5^2+1^2+(\sqrt{2})^2+2^2=0$
by the definition of the field $K$.
Thus $8\langle 1\rangle$ is isotropic over $K(\sqrt{2})$. Done.

Conclusion. $q=2\langle 1\rangle\bot 4\langle -1\rangle$
is a trace form over $K$.
Since $w_1(q)=w_2(q)=0$ (as shown before),
the corresponding morphism $\Gamma_K\to S_6$ arises
from $\Gamma_K\xrightarrow{\varphi}\widetilde{A_6}\xrightarrow{\varepsilon}A_6\xrightarrow{\iota} S_6$ where $\varphi\in H^1(K,\widetilde{A_6})$.
In short, there exists some $\varphi\in H^1(K,\widetilde{A_6})$
such that $e_K(\varphi)=(-1)\cup (-1)\cup (-1)$.

\bigskip
Step 3.
We will show that $e_K(\varphi)\neq 0$.

By \cite[page 83, Remark 32.13]{GMS},
$e_K(\varphi)\neq 0$ if and only if $-1\neq 4\,\square^2$ in $K$.

Suppose not.
We have $-1=4\,\square^2$ in $K$, i.e.
$5\langle 1\rangle$ is isotropic over $K$.
It follows that $8\langle 1\rangle$ is also isotropic over $K$.
Since $8\langle 1\rangle=\langle\langle 1,1,1\rangle\rangle$
is a 3-fold Pfister form, we find that $8\langle 1\rangle\simeq 4\langle 1\rangle\bot4\langle -1\rangle$ over $K$
because an isotropic Pfister form is hyperbolic
\cite[page 279, Corollary 1.6]{La}.
In summary, if $K^\prime$ is any field containing $K$,
then $8\langle 1\rangle\simeq 4\langle 1\rangle\bot4\langle -1\rangle$
over $K^\prime$.

\bigskip
Step 4.
Consider the quadratic forms $\psi=8\langle 1\rangle$,
$\varphi=5\langle 1\rangle\bot\langle 7\rangle$ over the field $K$.
Define $\widehat{K}$ to be the quotient field of the integral domain
$k[X_1,X_2,X_3,X_4,X_5,X_6]/\langle\sum_{1\leq i\leq 5}X_i^2+7X_6^2\rangle$.

Note that the field $K$ may be embedded into $\widehat{K}$ by sending
$x_i$ to $X_i/X_6$ for $1\leq i\leq 5$.

By Step 3, we find that $8\langle1\rangle\simeq 4\langle 1\rangle\bot4\langle -1\rangle$ over $\widehat{K}$.

We will apply Lemma \ref{l4.7} (note that $\psi=8\langle 1\rangle$
is anisotropic over $k$ by assumption).
It follows that $8\langle 1\rangle\simeq 5\langle 1\rangle\bot\langle 7\rangle\bot\varphi^\prime$ over $k$.
By Witt cancellation \cite[page 15, Theorem 4.2]{La},
we have $3\langle 1\rangle\simeq\langle 7\rangle\bot\varphi^\prime$.

Consequently, $7=3\,\square^2$ in $k$.
But we assume already that $3\langle 1\rangle\bot\langle -7\rangle$
is anisotropic over $k$.
Thus this leads to a contradiction.
In other words, $-1=4\,\square^2$ in $K$ is impossible.
\end{proof}

\begin{remark}
In the above proof, we have shown that : Let $K/k$ be the field extension defined in Step 1 where $k$ is a field satisfying that $8\langle 1\rangle$ is anisotropic over $k$. If $(-1)\cup (-1)\cup (-1)=0$ in $H^3(K, \bZ/2\bZ)$, then $3\langle 1\rangle\bot\langle -7\rangle$ is isotropic over $k$.
\end{remark}

{\it Proof of Theorem \ref{t1.5}.}

Step 1.
$G$ is a finite group whose $2$-Sylow subgroup is isomorphic to $Q_{16}$.
We check the proof in \cite[pages 90--92, Section 34]{GMS} (in particular, the proof of \cite[page 92, Proposition 34.6]{GMS}). It is proved there: If $k$ is a field containing $\bQ$ such that
${\rm Inv}^3_{{\rm nr},k}(\widetilde{A_6},\bZ/2\bZ)_{\rm norm}\neq 0$,
then ${\rm Inv}^3_{{\rm nr},k}(G,\bZ/2\bZ)_{\rm norm}\neq 0$. Thus we may apply Theorem \ref{t4.9} and Theorem \ref{t2.3}. It follws that $k(G)$ is not retract rational over $k$.

\medskip
Step 2. When $k=\bQ(\sqrt{d})$ where $d$ is a square-free positive integer
of the form $1+8m$ with $m\geq 2$,
then the quadratic form $3\langle 1\rangle\bot\langle -7\rangle$ is
anisotropic over $k$ by Lemma \ref{l4.4}.
The quadratic form $8\langle 1\rangle$ is also anisotropic over $k$
because $k$ is a subfield of the ordered field $\bR$. Apply Theorem \ref{t4.9}.

\medskip
Step 3. This step is unnecessary for the proof of Theorem \ref{t1.5}. We insert it here because we intend to call the reader's attention to the analogous situations of \cite[page 89]{GMS} and \cite[page 90--92]{GMS}.

Suppose that $G$ is a subgroup of odd index of $\widetilde{A_7}$ as in \cite[page 89, Theorem 32.26]{GMS}. Re-examine the proof therein.

It is easy to find that what is proved there is:
If $k$ is a field containing $\bQ$ such that
${\rm Inv}^3_{{\rm nr},k}(\widetilde{A_7},\bZ/2\bZ)_{\rm norm}\neq 0$,
then ${\rm Inv}^3_{{\rm nr},k}(G,\bZ/2\bZ)_{\rm norm}\neq 0$ for any such a group $G$.

Note that the above fact may be deduced also from the restriction-corestrection
arguments \cite[page 34, Proposition 14.4]{GMS}.

Use Theorem \ref{t4.9} and then apply Theorem \ref{t2.3} to deduce that
$k(G)$ is not retract rational over $k$.
\qed

\begin{remark}
(1) If $k=\bQ(\sqrt{-d})$ where $d$ is a square-free integer of the form
$7+8m$ for some integer $m\geq 0$, then $d\neq 3\,\square^2$ in $\bQ$
by Lemma \ref{l4.1}, and $d=4\,\square^2$ in $\bQ$ by
Legrange's four-square theorem.
Apply \cite[page 305, Theorem 2.7]{La}.
We find that the level of $k$ is $4$,
i.e. $-1=4\,\square^2$ in $k$.
Thus $8\langle 1\rangle$ is not anisotropic over $k$.

(2) If $k$ is a field containing $\bQ$ such that at least one of $-1, 2, -2$ is a square in $k$ (that is, $k(\zeta_8)/k$ is a cyclic extension), it is easy to see that $3\langle 1\rangle\bot\langle -7\rangle$ is isotropic over $k$.

(3) If $k\subset k^\prime$ are fields such that
{\rm (i)} $3\langle 1\rangle\bot\langle -7\rangle$ and
$8\langle 1\rangle$ are anisotropic over $k$,
and
{\rm (ii)} $k^\prime$ is an extension of $k$ by successive
extensions of odd-degree extensions and/or unirational extensions
(see Lemma \ref{l4.2}), then $3\langle 1\rangle\bot\langle -7\rangle$
and $8\langle 1\rangle$ are anisotropic over $k^\prime$.
\end{remark}


\end{document}